\newtheorem{prop}{Proposition}
\newtheorem{example}{Example}[section]
\DeclareMathOperator{\trace}{tr}
\title{A posteriori superlinear convergence bounds for \\ block conjugate gradient \thanks{Christian E. Schaerer: his research is partially supported by PRONII and by CONACYT-PY under project 14-INV186. Daniel B. Szyld: his research is supported in part by the U.S. National Science Foundation under grant DMS-1418882 and U.S. Department of Energy under grant DE-SC-0016578. Pedro Torres acknowledges the financial support of Scholarship-FEEI-CONACYT-PROCIENCIA. 
}}
\author{
   Christian E. Schaerer\thanks{Polytechnic School, National University of Asuncion, San Lorenzo, Central, Paraguay. 
   P.O.Box: 2111 SL. 
  (Email: {cschaer@pol.una.py}, {pjtorres@pol.una.py}). Corresponding author: P. J. Torres.}
   \and
   Daniel B. Szyld\thanks{Department of Mathematics, Temple University
            (038-16), 1805 N. Broad Street, Philadelphia, PA 19122-6094,
            USA. (Email: {szyld@temple.edu}).}
   \and
   Pedro J. Torres\footnotemark[2]
   }
\begin{document}

\maketitle

\renewcommand{\thefootnote}{\fnsymbol{footnote}}






\begin{abstract} 
 In this paper, we extend to the block case, the {\it a posteriori} bound showing superlinear 
convergence of Conjugate Gradients developed in [J.\ Comput.\ Applied Math., 48 (1993), pp. 327-341]; 
that is, we obtain similar bounds, but now for block Conjugate Gradients. We also present a series of computational experiments, illustrating the validity of the bound developed here, as well as the bound from [SIAM Review, 47 (2005), pp. 247-272] using angles between subspaces. 
Using these bounds, we make some observations on the onset of superlinearity, and how 
this onset depends on the eigenvalue distribution and  the block size. 
\end{abstract}

 \begin{keywords}
  superlinear convergence, block conjugate gradient method, {\it a posteriori} analysis.
 \end{keywords}

\begin{AMS}
  65F10, 65B99, 65F30
\end{AMS}

\section{Introduction}\label{sec:introduction}
When solving numerically linear systems of the form $Ax =b$, for a 
given right hand side $b$, and when  $A$ is a large $n \times n$  sparse symmetric positive
definite (s.p.d.) matrix, the method of choice is
conjugate gradient (CG) \cite{hestenes1952methods}.
It is well known that CG exhibits superlinear convergence. 
In this context the term {\em  superlinear} is understood to mean that
the $A$-norm of the error is monotonically decreasing at first in a linear manner, 
but at some point the slope of this graph may change and become faster. In other words,
the method is faster than linear; see, e.g.,
Figure~\ref{fig:factor_vds_ss_2} in Section~\ref{sec:experiments_bcg}. 
Several authors studied this phenomenon; see, e.g., 
\cite{beckermannsuperlinear2002,SLEIJPEN1996233,vanderSluis1986, van1987convergence}.
In particular, they studied in an {\em a posteriori} manner when one would expect
the change from a linear regime, to a superlinear regime (i.e., a steeper slope), when this
occurs.\footnote[7]{We note that in some cases,the superlinear regime may not 
start before convergence is reached. In those cases, the convergence is just linear}
See also \cite{Axelsson2002, Elman00, Gergelis2014, Campbell96,Strakos1991,Winther1980} for
different analyses including studies on the effect of round-off errors.

When one has $s > 1$ right hand sides, i.e., when one wishes to solve a block system of the form
  \begin{equation}\label{eq:linear_system11}
   A{\bf X}={\bf B},
  \end{equation}
where $\bf{X}$ and $\bf{B}$ are $n\times s$ skinny-tall matrices (block vectors)  with $s\ll n$, 
then, the block conjugate gradient (block CG) method can be considered for its solution.
This block method was introduced  by O'Leary in 1980  \cite{o1980block}, and can be used either
when  one has multiple right-hand sides, or when one wishes to accelerate the convergence of CG
by using a richer space (usually by adding random vectors to the original right-hand side $b$ to produce ${\bf B}$).
We have this latter situation very much in mind in our investigations.
  
In this paper, we extend the theory developed by van der Sluis and van der Vorst 
\cite{vanderSluis1986,van1993superlinear} in order to explain
the superlinear convergence of CG in the block case. Their analysis uses
spectral information (and Ritz values) to bound the convergence of CG by that of a comparison
method which commences with an initial vector projected onto a subspace where certain components 
have been deflated. This projection is based on polynomial expressions describing
the CG method. We call these 
new {\em a posteriori} convergence bounds, {\em spectral-based bounds} for short,
and they are discussed in Section~\ref{seq:spectrum}.

There is another set of convergence bounds developed by
Simoncini and Szyld \cite{Simoncini2005,simoncini2013superlinear} where angles 
between subspaces are considered.  We call these bounds,  {\em subspace-based bounds} for short,
and they are reviewed in Section~\ref{seq:subspacemodel}.

We illustrate the effectiveness of these bounds with a series of numerical experiments
in Section~\ref{sec:experiments_bcg}.
It can be observed that both bounds capture well the slope of the block error or residual
norm (in the appropriate block norm). 
As one would expect intuitively, the larger the size of the block~$s$, the faster the convergence,
and in particular, the onset of superlinearity (the point where the slope changes in the
convergence curve) occurs earlier; cf. \cite{Kubinova2020, Simoncini2005}. 
The {\em a posteriori} bounds described in this paper
reproduce properly this onset of superlinarity, both in cases of single eigenvalues or
with multiple eigenvalues (or clusters).
These bounds explain the observed
superlinear convergence behavior of block CG: the onset of superlinarity
occurs when the block Krylov subspace is close to an invariant subspace of
the matrix $A$ (subspace bound), or when the Ritz values (or latent roots
of the residual polynomial) are close to the eigenvalues of the matrix $A$
(spectral bound).

   Throughout this paper, calligraphic  letters ${\cal A}$, ${\cal H}$ and ${\cal Z}$ 
   denote square $s\times s$ matrices and upper case letters $A$, $V$, $D$ and $H$ denote 
   square $n\times n$ matrices and ${\bf R}$, ${\bf X}$, ${\bf U}$, ${\bf W}$ and ${\bf B}$ 
   denote rectangular $n\times s$ matrices, matrix polynomials are denoted with upper case Greek letters 
   (e.g., $\Phi$, $\Omega$, $\Xi$). 
   Scalars are denoted with lower case Greek letters (e.g., $\lambda$, $\gamma$, $\alpha$) and integers are 
   normally denoted as $i, j, k, m$ and $n$. A subscript on a matrix, vector, or scalar denotes an iteration 
   number. 
   For simplicity, we consider only real matrices, although the generalization to complex matrices is direct.
     

\section{Block CG}\label{sec:polyblock_bcg}
In this section we review the CG method and its block counterpart. We present 
their formulation as minimization problems. We discuss the block Lanczos method,
and its formulation using a matrix-polynomial approach.

At each step $m$, CG searches for the approximate solution $x_m$ in the
Krylov subspace ${\cal K}_m(A,r_0) = \{ \sum_{k=0}^{m-1} c_k A^k r_0: c_k \in \mathbb{R} \}$
shifted by the initial vector $x_0$,
and such that the $A$-norm of the error is minimized, i.e.,
$x_m$ is such that 
\begin{equation} \label{CG:eq}
\|x_* - x_m \|_A = \min_{x \in x_0 + {\cal K}_m} \|x_* - x \|_A  = 
\min_{d \in A  {\cal K}_m} \|r_0 -d \|_{A^{-1}}
= \|r_m\|_{A^{-1}} ,
\end{equation}
where $r_m = b - Ax_m$ is the $m$th residual vector, 
$x_*$ is the solution of $Ax=b$, and the $A$-norm is defined as usual
as $\|v\|_A = \langle v, A v \rangle^{1/2}$ with the latter being the Euclidean inner product;
see, e.g., \cite{golub2012matrix,greenbaum1997iterative,saad2003iterative,Simoncini.Szyld.07}.
The latter two equalities in \eqref{CG:eq} indicate that minimizing the $A$-norm
of the error is mathematically equivalent to minimizing the $A^{-1}$-norm of
the residual. Of course the latter is never computed explicitly, but as we shall see,
it is a useful way of looking at the method.
     
Similarly, one can construct the block Krylov subspace as
  \begin{equation} \label{bKrylov:eq}
          \mathbb{K}_m(A, {\bf R}_0):=\Bigg \{ \sum _{k=0} ^{m-1} A^k{\bf R}_0 {\cal C}_k: 
                {\cal C}_k \in \mathbb{S} \Bigg \},
  \end{equation}
where the initial block residual is  ${\bf R}_0 := {\bf B} - A{\bf X}_0$,
with ${\bf X}_0$ being the initial block vector and 
$\mathbb{S} \subseteq \mathbb{R}^{s\times s}$ is a subspace containing the identity ${\cal I}_s$ and 
closed under matrix multiplication and transposition;
see, e.g.,  \cite{greenbaum1997iterative, Frommer2017, Frommer2020,GutknechtKrylov, saad2003iterative}.\footnote{%
We point out that other versions of block Krylov spaces have been used in the literature,
e.g., in \cite{ELGUENNOUNI2004243}, but in this paper we will use the above definition,
known as {\em classical}. See also \cite{GUTKNECH}.}

At each step $m$, block CG searches for the approximate solution on 
       the $m$th block Krylov subspace \eqref{bKrylov:eq}, shifted by ${\bf X}_0$, 
such that it minimizes the $A$-norm of the (block) error. Let ${\bf X}_*$ be the
solution of \eqref{eq:linear_system11}, then the approximation ${\bf X}_m$ and
the corresponding residual ${\bf R}_m := {\bf B} - A{\bf X}_m$ satisfy
    \begin{equation}\label{eq:residual_block}
     \min_{{\bf X} \in {\bf X}_0 + \mathbb{K}_m }  \| {\bf X}_* - {\bf X} \|_A =
              \min_{{\bf D} \in A\mathbb{K}_m } \| {\bf R}_0 - {\bf D} \|^2_{A^{-1}} =
             \| {\bf R}_m \|^2_{A^{-1}} ,
     \end{equation}
where as usual the norm of block vectors is the Frobenious norm, so that the $Z$-norm
(for $Z=A$ or $Z = A^{-1}$) 
here is defined with the inner product which induces the Frobenious norm, that is,
$$ 
       \left\langle {\bf V}, {\bf W} \right\rangle_{Z} := \trace({\bf V}^T Z {\bf W}) 
       = \sum^{s}_{i=1} \left\langle v_i, w_i \right\rangle_{Z}, 
$$ 
where ${\bf V}=\left[ v_1, \ldots, v_s \right]$ and ${\bf W}=\left[ w_1, \ldots, w_s \right]$.
We denote the solution of the minimization problem  on the right in
(\ref{eq:residual_block}) by~${\bf D}_{m}$.

\subsection{Block Lanczos}
The block Lanczos procedure is the block version of the Lanczos method. It produces
a block basis
of the block Krylov susbpace $\mathbb{K}_m(A, {\bf R}_0)$ which we collect in the matrix
   $W_m=[{\bf U}_0, {\bf U}_1, \dots, {\bf U}_{m-1}]$.
It proceeds with a three-term recurrence as follows.
Let ${\bf R}_{0}={\bf U}_0 {\cal B}_0$ be the \textit{QR}-factorization of ${\bf R}_0$, where ${\bf U}_0$ 
   and  ${\cal B}_0$  are real $n \times s$ and $s \times s$ matrices, respectively. 
Then, the sequence  of block vectors $\{ {\bf U}_i\}, i=0, 1,2\ldots,m$, 
which have orthonormal columns and are mutually orthogonal are constructed by
the  three-term recurrence 
 \begin{equation}\label{eq:block_lanczos_iter_1}
{\bf U}_{i+1} {\cal B}_{i+1} = {\bf M}_i = A {\bf U}_i - {\bf U}_i {\cal A}_i - {\bf U}_{i-1} {\cal B}^{T}_{i},
 \end{equation}
   where ${\bf U}_{-1}={\bf 0}$, ${\cal A}_i={\bf U}^*_i A {\bf U}_i$ and ${\bf U}_{i+1}{\cal B}_{i+1}$ is the \textit{QR}-factorization of ${\bf M}_i$
   with ${\bf U}_{i+1}$ orthogonal and ${\cal B}_{i+1}$ upper triangular; see, e.g., \cite{golub2012matrix}.  
Thus, at each iteration step $m$, the following matrix relation, called the block Lanczos relation,
     holds
  \begin{equation}\label{eq:block_lanczos_iter}
      A [ {\bf U}_0, {\bf U}_1, \ldots, {\bf U}_{m-1}]= [ {\bf U}_0, {\bf U}_1, \ldots, {\bf U}_{m-1}] T_m +  {\bf U}_{m} {\cal B}_m {E^T_m} ,
  \end{equation}
where ${E_m=[0_s,0_s,\ldots, {\cal I}_s]^T}\in \mathbb{R}^{s m \times s}$ and $T_m$ is the 
block tridiagonal matrix 
of dimension  ${ms \times ms}$
  \begin{equation}
         T_m = \left( \begin{matrix} {\cal A}_0 & {\cal B}^T_1 &  \\ {\cal B}_1 & {\cal A}_1 & \ddots \\   
              & {\cal B}_2 & \ddots \\ &  & \ddots & {\cal A}_{m-2} & {\cal B}^T_{m-1} \\
      & &  & {\cal B}_{m-1} & {\cal A}_{m-1}
  \end{matrix} \right).
  \end{equation}
The expression 
   (\ref{eq:block_lanczos_iter}) can be rewritten as 
     \begin{equation}\label{eq:matrix_H}
        A \mathit{W}_m= \mathit{W}_{m+1} \mathit{\tilde{T}_m}.
   \end{equation}
   where $\mathit{\tilde{T}_m}$
    is a real block tridiagonal  matrix of dimension  ${(m+1)s \times ms}$, 
and 
$\mathit{T}_m = [\mathit{I}_{ms}, 0]\mathit{\tilde{T}_m}$.
   
  The approximate solution of the linear systems produced by block CG  at the $m$th iteration is given by 
    \begin{equation}\label{sol:Xm}
     {\bf X}_m={\bf X}_0 + W_m Y_m,
    \end{equation}
   where ${ Y}_m=[y^T_1,\ldots,y^T_m]^T$ with 
    $y_i \in \mathbb{R}^{s\times s}$ is obtained by solving the equation 
\begin{equation} \label{eq:ym_definition}
    T_m Y_m= {\cal B}_0 E_1
\end{equation}  
with  ${E_1=[{\cal I}_s, 0_s, \ldots, 0_s]^T} \in \mathbb{R}^{s m\times s}$. 
    Since   ${\bf R}_0 = {\bf U}_0 {\cal B}_0$, then equation  (\ref{eq:ym_definition})
    is equivalent to $W_mT_mY_m= {\bf R}_0$. In addition,   
      at each block iteration it is possible to compute the residual of the solution 
    without explicitly computing the solution, using the expression 
\linebreak
     ${\bf  R}_m =  - {\bf U}_m {\cal B}_m E_m Y_m $ 
    which is obtained as follows (see, e.g., \cite{Frommer2017}, \cite{saad2003iterative})
     \begin{align}
           {\bf  R}_m & =   {\bf B} - A {\bf X}_m  &    \mbox{by definition of ${\bf R}_m$} \nonumber \\
                             & =   {\bf B} - A({\bf X}_0 + W_mY_m) &  \mbox{by expression (\ref{sol:Xm})} \nonumber  \\
                             & =   {\bf R}_0 - AW_mY_m & \mbox{ by definition of ${\bf R}_0$} \nonumber \\
	                   & =   {\bf R}_0 - W_mT_mY_m - {\bf U}_m {\cal B}_m E_m Y_m &   
\mbox{by identity(\ref{eq:block_lanczos_iter})} \nonumber  \\
	                & =   - {\bf U}_m {\cal B}_m E_m Y_m &   \mbox{by identity (\ref{eq:ym_definition}),  
	                \label{eq:51}} 
   \end{align} 
and  notice that   
     ${\bf R}_m={\bf R}_0- {\bf D}_m$ and ${\bf D}_m=AW_m Y_m$ in 
     (\ref{eq:residual_block}).
  
  
An implementation of the block Conjugate Gradient method is described 
in Algorithm \ref{alg:blockcg} below.
We note that in this algorithm, assuming exact arithmentic, one has
the orthogonality properties ${\bf R}^T_k {\bf R}_j=0$ and {${\bf P}^T_k A {\bf P}_j=0$}  with $j\neq k$, 
and as long as the matrices {${\bf P}_k$} and {${\bf R}_k$} retain full rank, the algorithm is well defined. 
  \begin{algorithm}[thb]
 \caption{Block Conjugate Gradient - block CG \cite{o1980block}.  \label{alg:blockcg}}
 \begin{algorithmic}[1]
 \STATE Given an initial approximation ${\bf X_0}$ to the solution matrix ${\bf X^*}$;  
 \STATE Set ${\bf R}_0={\bf B}-A {\bf X}_0$, ${\bf P}_0 = {\bf R}_0$;
 \FOR    {$k=0,1,2 \dots $}
 \STATE {${\cal D}_k = ({\bf P}^{T}_k A {\bf P}_k)^{-1} {\bf R}^T_k {\bf R}_k$}
 \STATE {${\bf X}_{k+1} = {\bf X}_k + {\bf P}_k {\cal D}_k$ }
 \STATE {${\bf R}_{k+1} = {\bf R}_k - A{\bf P}_k {\cal D}_k$ }
 \IF {convergence conditions are satisfied}
\STATE break;
\ENDIF
 \STATE {${\cal P}_{k}= ({\bf R}^T_k {\bf R}_k)^{-1}{\bf R}^T_{k+1} {\bf R}_{k+1}$ }
  \STATE {${\bf P}_{k+1}=({\bf R}_{k+1}+{\bf P}_k {\cal P}_{k})$ }
  \ENDFOR
  \end{algorithmic}
  \end{algorithm} 
  
  
\subsection{Block residual polynomial}
The spectral bounds are based on the analysis of the block residual polynomials.
In this section we bring forth some preliminary definitions, identities and properties
of matrix polynomials, and in particular the block residual polynomial.
   
Let $\mathbb{P}_{m,s}$  be the space of matrix-valued polynomials with elements of the form 
    \begin{equation}
    \Upsilon_m(\eta)=\sum^{m}_{i=0} \eta ^i {\cal C}_i
    \end{equation}
     where $\eta \in \mathbb{R}$ and  
   ${\cal C}_i$ are real ${s\times s}$ matrices.  
We recall the operation introduced in \cite{mdKent89}, 
   \begin{equation}\label{Graag:operator}
   \Upsilon_m(A) \circ {\bf X}= \sum^{m}_{i=0} A^i {\bf X} {\cal C}_i,
   \end{equation}
    where $A$ is any  $n\times n$ matrix, ${\bf X}$ is a block $n \times s$  vector and `$\circ$' is 
    called the Gragg operator.
    
    {Denote by $\mathbb{G}_{m,s} \subset \mathbb{P}_{m,s}$}  the 
    subspace of matrix-valued polynomials with elements of the form 
    $\Upsilon_m(\eta) = {\cal I}_s - \sum^{m-1}_{i=0}\eta ^{i+1}{\cal C}_{i}$, 
i.e., the polynomias such that $\Upsilon_m(0) = {\cal I}_s$.
 Hence, using the nomenclature (\ref{Graag:operator}) and the subspace $\mathbb{G}_{m,s} $, the residual  
    ${\bf R}_m$ of block CG at the $m$th iteration can be expressed in terms of
     matrix polynomials as 
 \begin{equation}\label{eq:residual_matrix_polynomial1}
     {\bf R}_m = \Upsilon _m(A)\circ {\bf R}_0={\bf R}_0 - \sum^{m-1}_{i=0} A^{i+1}{\bf R}_0 {\cal G}_i,
 \end{equation}
   where $ \Upsilon_m \in \mathbb{G}_{m,s} $,  and ${\cal G}_{i}$ are $s\times s$ matrices, $i=1,\ldots,m-1$. 

    Consequently, the variational formulation of block CG (\ref{eq:residual_block}) 
  can be expressed as follows using matrix-value polynomials 
     with $A$  and ${\bf R}_0$  as arguments 
\begin{equation}\label{eq:bcg_pol_21}
             \|{\bf  R}_m \|_{A^{-1}} = \min_{\Upsilon _m \in {\mathbb{G}}_{m,s}  } 
             \| \Upsilon _m(A)\circ {\bf R}_0\|_{A^{-1}}
             = \| \Phi_m(A)\circ {\bf R}_0\|_{A^{-1}} ,
  \end{equation}
where $\Phi_m(\eta ) \in \mathbb{G}_{m,s} $ is the solution of the minimization problem.
   
The three-term recurrence (\ref{eq:block_lanczos_iter}) of Block Lanczos 
can also be written in matrix polynomial form.
   Each matrix ${\bf U}_i$ in the recurrence (\ref{eq:block_lanczos_iter}) is
a linear combinations of matrices $ A^{i}{\bf R}_0 \in \mathbb{K}_m (A, {\bf R}_0)$ for $i=0,1,\ldots,m-1$.
Therefore, we can set ${\bf U}_i=\Gamma_i(A) \circ {\bf R}_0$, and thus, (\ref{eq:block_lanczos_iter}) 
   can be rewritten in matrix polynomial  form as 
    \begin{equation}\label{eq:H_pol_matrix1}
        \eta  P_{m-1}(\eta)=P_{m-1}(\eta) T_m +  \Gamma_m(\eta) {\cal B}_m E_{m},
    \end{equation}
     where $P_{m-1}(\eta):=[\Gamma _0(\eta), \Gamma _1(\eta),\ldots, \Gamma _{m-1}(\eta)]$
     with  $\Gamma_i \in \mathbb{P}_{i,s}$; see, e.g., \cite{mdKent89,SIMONCINI1996_Ritz}.

    It can be observed from (\ref{eq:H_pol_matrix1}) that $\det(\lambda I - T_m)=0$ 
     if and only if $\det(\Gamma _m(\lambda))=0$. Therefore, the eigenvalues of $T_m$ 
     are the latent roots of $\Gamma _m(\lambda)$, hence coinciding with the Ritz values of $A$ 
     associated with $\mathbb{K}_m(A, {\bf R}_0)$. In addition, the matrix 
     polynomials $\Gamma _m ( \lambda )$ and $\Phi_m ( \lambda )$ represent the block CG process dynamics 
     but from different perspective \cite{Frommer2017, SIMONCINI1996_Ritz}. In the 
following proposition we show that the latent roots of these two
      matrix polynomials are the same.
  \begin{prop}
    The latent roots of the block CG polynomial $\Phi_m(\eta)$ coincide with the latent 
    roots of $\Gamma_m(\eta)$. Hence they are also the eigenvalues of $T_m$ and the Ritz 
    values of the matrix $A$ associated 
    with $\mathbb{K}_m(A, {\bf R}_0)$.
 \end{prop}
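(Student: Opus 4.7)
The plan is to derive a matrix-polynomial identity of the form $\mathcal{B}_0\Phi_m(\eta) = -\Gamma_m(\eta)\mathcal{L}$ for a suitable invertible $s\times s$ matrix $\mathcal{L}$; once this is established, taking determinants shows that the zero sets of $\det\Phi_m$ and $\det\Gamma_m$ coincide. The paragraph immediately preceding the statement has already identified the latent roots of $\Gamma_m$ with the eigenvalues of $T_m$ (equivalently the Ritz values of $A$ in $\mathbb{K}_m(A,{\bf R}_0)$), so only the equivalence between the latent roots of $\Phi_m$ and $\Gamma_m$ needs proof; this is the natural block generalization of the classical CG fact that the residual polynomial has the Ritz values as roots.

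The first step is to express ${\bf R}_m$ in two ways. From (\ref{eq:51}) together with ${\bf U}_m = \Gamma_m(A)\circ {\bf U}_0$, setting $\mathcal{L} := \mathcal{B}_m E_m Y_m$, we obtain ${\bf R}_m = -(\Gamma_m(A)\circ {\bf U}_0)\,\mathcal{L}$. From (\ref{eq:bcg_pol_21}) together with the $QR$ factorization ${\bf R}_0 = {\bf U}_0\mathcal{B}_0$, we obtain ${\bf R}_m = \Phi_m(A)\circ ({\bf U}_0\mathcal{B}_0)$. Expanding each side using the Graag operator (\ref{Graag:operator}), both become sums $\sum_{i=0}^{m} A^i{\bf U}_0\,\mathcal{E}_i$ with $s\times s$ right-coefficients: in the first case $\mathcal{E}_i = -\mathcal{C}_i^{\Gamma_m}\mathcal{L}$, and in the second case $\mathcal{E}_i = \mathcal{B}_0\mathcal{F}_i$, where $\mathcal{F}_i$ denotes the $i$th coefficient of $\Phi_m$ (so in particular $\mathcal{F}_0 = \mathcal{I}_s$ by the definition of $\mathbb{G}_{m,s}$).

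Assuming that block CG has not terminated before step $m$ (so that the block grade of ${\bf R}_0$ exceeds $m$), the blocks $\{A^i{\bf U}_0\}_{i=0}^{m}$ satisfy the cancellation property: $\sum_{i=0}^{m} A^i{\bf U}_0\,\mathcal{D}_i = 0$ forces each $\mathcal{D}_i = 0$. Equating the two coefficient sequences therefore yields the desired polynomial identity $\mathcal{B}_0\Phi_m(\eta) = -\Gamma_m(\eta)\mathcal{L}$. Evaluating at $\eta = 0$ gives $\mathcal{B}_0 = -\Gamma_m(0)\mathcal{L}$; since ${\bf R}_0$ has full column rank, $\mathcal{B}_0$ is invertible, so both $\Gamma_m(0)$ and $\mathcal{L}$ are invertible as well. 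Taking determinants then produces $\det(\mathcal{B}_0)\det(\Phi_m(\lambda)) = (-1)^s\det(\Gamma_m(\lambda))\det(\mathcal{L})$, which immediately gives $\det\Phi_m(\lambda) = 0 \iff \det\Gamma_m(\lambda) = 0$ and completes the proof.

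The main obstacle is the bookkeeping with the Graag operator: because its coefficients act by right multiplication, one must carefully rewrite $\Phi_m(A)\circ({\bf U}_0\mathcal{B}_0)$ as a Graag action on ${\bf U}_0$ alone whose coefficients are $\mathcal{B}_0\mathcal{F}_i$ rather than $\mathcal{F}_i\mathcal{B}_0$, and similarly reorganize $(\Gamma_m(A)\circ{\bf U}_0)\mathcal{L}$ into coefficients $\mathcal{C}_i^{\Gamma_m}\mathcal{L}$. The other nonroutine ingredient is the right-linear independence of $\{A^i{\bf U}_0\}_{i=0}^{m}$, which rests on the standing assumption that no block Lanczos breakdown has occurred by step~$m$; without this, the polynomial identity cannot be read off coefficient by coefficient.
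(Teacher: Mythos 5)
Your proposal follows essentially the same route as the paper: both equate the two representations ${\bf R}_m=-(\Gamma_m(A)\circ{\bf U}_0)\,\mathcal{B}_mE_mY_m$ and ${\bf R}_m=\Phi_m(A)\circ{\bf R}_0$ to obtain a polynomial identity of the form $\Phi_m(\eta)=-\Gamma_m(\eta)\cdot(\text{constant }s\times s\text{ matrix})$ and then read off that the latent roots coincide. Your version is in fact the more complete one: the paper simply asserts $\Phi_m(\eta)=-\Gamma_m(\eta)\mathcal{B}_mE_mY_m$ without the left factor $\mathcal{B}_0$, without invoking the right-linear independence of $\{A^i{\bf U}_0\}$ to justify matching coefficients, and without verifying that $\mathcal{L}=\mathcal{B}_mE_mY_m$ is invertible --- a point your evaluation at $\eta=0$ supplies and which is genuinely needed for the ``if and only if'' between $\det\Phi_m(\lambda)=0$ and $\det\Gamma_m(\lambda)=0$.
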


\begin{proof}
   From (\ref{eq:51})  and using matrix value representation, the block 
   residual can be expressed as 
  \begin{equation*}
      {\bf R}_m= -(\Gamma_m(A) \circ {\bf U}_0){\cal B}_m E_m Y_m.
   \end{equation*}
   Using (\ref{eq:residual_matrix_polynomial1}) we can arrive to the following equality in matrix polynomial form
   \begin{equation*}
       \Phi_m(\eta)=-\Gamma_m(\eta) {\cal B}_m E_m Y_m,
   \end{equation*}
   then the latents roots of $\Phi_m(\eta)$ and $\Gamma_m(\eta)$ are the same.
  \end{proof}
 

\section{\textit{A posteriori} models for block CG}\label{sec:posteriori_bcg}
We are ready to present the two \textit{a posteriori} models which explain the
superlinear behavior of block CG. In the subspace-based model, the bound is based
on the angle (or gap) between the block Krylov subspace and an invariant subspace of $A$.
The $A$-norm of the error, or equivalently, the $A^{-1}$-norm of the (block) residual
is bounded by the residual norm of another CG process in which the components
of the corresponding eigenvectors have been deflated.
In the spectral bound, we have a similar comparison CG process, and the bound
is based on the difference between the eigenvalues and the Ritz values.

Let $\lambda_1, \ldots, \lambda_n $ and $v_1, \ldots, v_n$ be the eigenvalues and eigenvectors associated with the matrix $A$, chosen such that they form an orthonormal basis. Then $A=V \Lambda V^T$ is a spectral decomposition of matrix $A$ with $V=\left[v_1,\ldots,v_n\right]$ and $\Lambda = \diag\left[\lambda_1,\ldots,\lambda_n\right]$. 

The comparison CG process is a residual sequence, in which the components on the chosen invariant subspace have been deflated \cite{Simoncini2005,van1987convergence}. To define this more precisely let $\Pi_Q$ be a spectral projector onto an invariant subspace   $\mathbb{R}(Q)$ of the matrix $A$ with dimension $k$\footnote{Here and elsewhere in 
the paper $\mathbb{R}(Q)$ denotes the range of the matrix $Q$. We mention that in\cite{Simoncini2005} the invariant subspace must be simple, i.e., such that there is a complementary subspace which is also invariant. Here, this condition is fulfilled automatically since the invariant subspaces consist of linear combinations of orthogonal eigenvectors.}, where $Q$ is an $n\times k$ matrix whose columns are $k$ eigenvectors of $A$.
The spectral projector is constructed as $\Pi_Q=QQ^T$ and in this case it is also an orthogonal projector with respect to the $A^{-1}$-inner product $\left\langle \cdot, \cdot \right\rangle_{A^{-1}}$ since the matrix $A^{-1}$ commutes with its spectral projector, i.e., $\Pi_Q A^{-1}=A^{-1} \Pi_Q$.

The block comparison process is defined as a CG process which commences with
$\bar{\bf R}_0=(I-\Pi_Q){\bf R}_m$, i.e., with the initial residual being the $m$th block residual of the original CG process, but having its components in $\mathbb{R}(Q)$ deflated,
i.e., such that
  
   \begin{equation}\label{eq:optimal_cg}
              \| \bar{\bf R}_j \|_{A^{-1}} = 
              \min_{{\bf D}\in A\mathbb{K}_j(A,\bar{\bf R}_0)} \| \bar{\bf R}_0 - {\bf D}\|_{A^{-1}} . 
     \end{equation}


\subsection{Subspace-based bound for block CG\label{seq:subspacemodel}} 
We present an {\em a posteriori bound} which is a special case of those presented in \cite{Simoncini2005}.
As indicated above, the bound is obtained by considering a comparison CG process
defined by (\ref{eq:optimal_cg}). 
It is an {\em a posteriori bound} since it assumed that
the approximate solution at the $m$th step ${\bf X}_m$ is known, 
   and consequently, the corresponding residual ${\bf R}_m$. 
We summarize the subspace-based bound in the following theorem, where the norm of
the residual $j$ steps after the $m$th step is bounded by the $j$-th residual norm
of the comparison process. The factor in the bound depends on
the angle between the Krylov subspace, and the invariant subspace $\mathbb{R}(Q)$, the same
subspace used for the deflation.


\begin{theorem}[Subspace-based bounds for block Conjugate Gradient~\cite{Simoncini2005}]\label{thm:sbb_CG_1} 
Consider an {$n \times k$} real matrix $Y$, whose columns are a basis of a $k$-dimensional 
subspace of $A\mathbb{K}_m(A,{\bf  R}_0)$. Let $Q$ be an $n\times k$ 
matrix whose columns are $k$ eigenvectors of $A$ and $\Pi_Q$ a spectral projector 
onto the invariant subspace $\mathbb{R}(Q)$. 
Let $\Pi_Y$ be the $A^{-1}$-orthogonal projector onto $\mathbb{R}(Y)$, and 
let $\gamma _m = \| (I-\Pi_Y)\Pi_Q \|_{A^{-1}}$. Then 
              \begin{eqnarray}\label{eq:cota_ss_cg11} 
  \| {\bf R}_{m+j}\|_{A^{-1}} & \leq & \min_{  {\bf D} \in A\mathbb{K}_j(A, {\bf R}_m)} 
           \left\{ \|(I-\Pi_Q)({\bf R}_m - {\bf D})\|_{A^{-1}}  \right. \\ \nonumber 
                       & & 
 \qquad \qquad \qquad \qquad \left.  +\gamma _m \|\Pi_Q({\bf R}_m - {\bf D})\|_{A^{-1}}  \right\} \\  
                              \label{eq:cota_ls_cg11}
    &\leq & \sqrt[]{2} \min_{{\bf D} \in A\mathbb{K}_j(A, {\bf R}_m)} \left\lVert \begin{bmatrix}(I-\Pi_Q)\\
    \gamma _m \Pi_Q   \end{bmatrix} ({\bf R}_m - {\bf D})\right\rVert _{\star},
               \end{eqnarray}

   \noindent where 
   $\| \cdot \|_{\star}$ is an induced vector norm from the following inner product. 
   Let $u_i,v_i \in \mathbb{R}^n$, $i=1,2$; then, if $u^T=\left[u^T_1, u^T_2\right]$, $v^T=\left[v^T_1,v^T_2\right]$,   
   $\left\langle u,v\right\rangle_{\star}=\left\langle u_1,v_1\right\rangle_{A^{-1}} + \left\langle u_2,v_2\right\rangle_{A^{-1}}$.
   
   \end{theorem}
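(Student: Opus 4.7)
The plan is to exploit the minimality of $\|{\bf R}_{m+j}\|_{\mbox{$A^{-1}$-$F$}}$ over the ambient Krylov subspace, and then massage the bounding expression by inserting a carefully chosen corrector in $\mathbb{R}(Y)$ so that the $\Pi_Q$-part of the residual gets multiplied by $\gamma$ rather than appearing in full. First, since $A\mathbb{K}_{m+j}(A,{\bf R}_0)\supseteq A\mathbb{K}_m(A,{\bf R}_0)+A\mathbb{K}_j(A,{\bf R}_m)$ and ${\bf R}_m={\bf R}_0-{\bf D}_m^{\star}$ with ${\bf D}_m^{\star}\in A\mathbb{K}_m(A,{\bf R}_0)$, the minimisation (\ref{eq:residual_block}) applied at step $m+j$ yields
\begin{equation*}
\|{\bf R}_{m+j}\|_{\mbox{$A^{-1}$-$F$}}\le \|{\bf R}_m-{\bf D}-{\bf D}'\|_{\mbox{$A^{-1}$-$F$}}
\end{equation*}
for every ${\bf D}\in A\mathbb{K}_j(A,{\bf R}_m)$ and every ${\bf D}'\in A\mathbb{K}_m(A,{\bf R}_0)$.

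Next, I would choose ${\bf D}':=\Pi_Y\Pi_Q({\bf R}_m-{\bf D})$; this is admissible because the range of $\Pi_Y$ equals $\mathbb{R}(Y)\subseteq A\mathbb{K}_m(A,{\bf R}_0)$. Splitting the free vector through $\Pi_Q$ yields the identity
\begin{equation*}
{\bf R}_m-{\bf D}-{\bf D}'=(I-\Pi_Q)({\bf R}_m-{\bf D})+(I-\Pi_Y)\Pi_Q({\bf R}_m-{\bf D}),
\end{equation*}
and the triangle inequality in $\|\cdot\|_{\mbox{$A^{-1}$-$F$}}$ combined with $\Pi_Q^{2}=\Pi_Q$ and the operator-type bound $\|(I-\Pi_Y)\Pi_Q\,v\|_{\mbox{$A^{-1}$-$F$}}\le\gamma\|v\|_{\mbox{$A^{-1}$-$F$}}$ produces
\begin{equation*}
\|{\bf R}_m-{\bf D}-{\bf D}'\|_{\mbox{$A^{-1}$-$F$}}\le \|(I-\Pi_Q)({\bf R}_m-{\bf D})\|_{\mbox{$A^{-1}$-$F$}}+\gamma\|\Pi_Q({\bf R}_m-{\bf D})\|_{\mbox{$A^{-1}$-$F$}}.
\end{equation*}
Minimising over ${\bf D}\in A\mathbb{K}_j(A,{\bf R}_m)$ then delivers the first inequality~(\ref{eq:cota_ss_cg11}).

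For the second bound~(\ref{eq:cota_ls_cg11}) the plan is to invoke the elementary $a+b\le\sqrt{2(a^{2}+b^{2})}$ with $a=\|(I-\Pi_Q)({\bf R}_m-{\bf D})\|_{\mbox{$A^{-1}$-$F$}}$ and $b=\gamma\|\Pi_Q({\bf R}_m-{\bf D})\|_{\mbox{$A^{-1}$-$F$}}$, and observe that $\sqrt{a^{2}+b^{2}}$ equals $\left\|\begin{bmatrix}(I-\Pi_Q)\\ \gamma\Pi_Q\end{bmatrix}({\bf R}_m-{\bf D})\right\|_{\star}$ by the very definition of $\langle\cdot,\cdot\rangle_{\star}$, so taking the minimum on both sides yields the stated factor of $\sqrt{2}$. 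The main obstacle is justifying the operator-type inequality used above: the scalar $\gamma=\|(I-\Pi_Y)\Pi_Q\|_{\mbox{$A^{-1}$-$F$}}$ has to be read as the norm induced by the block $A^{-1}$-Frobenius inner product, and the estimate is routed onto the $\Pi_Q$-projected component through the trivial rewriting $(I-\Pi_Y)\Pi_Q=(I-\Pi_Y)\Pi_Q\,\Pi_Q$. A secondary, easier subtlety is that the corrector ${\bf D}'$ depends on the free variable ${\bf D}$; this is legitimate because the bound of the first step holds pointwise in both ${\bf D}$ and ${\bf D}'$.
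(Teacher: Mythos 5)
Your proof is correct and follows essentially the same route as the source: the paper does not reprove this theorem but cites the general framework of Simoncini and Szyld \cite{Simoncini2005}, whose argument is exactly your chain --- optimality of the $(m+j)$th minimisation over $A\mathbb{K}_m(A,{\bf R}_0)+A\mathbb{K}_j(A,{\bf R}_m)$, the corrector ${\bf D}'=\Pi_Y\Pi_Q({\bf R}_m-{\bf D})$, the split into $(I-\Pi_Q)$ and $(I-\Pi_Y)\Pi_Q$ terms, and the Cauchy--Schwarz step for the $\sqrt{2}$ factor. Your reading of $\gamma$ as the induced operator norm (so that $\|(I-\Pi_Y)\Pi_Q{\bf V}\|_{\mbox{$A^{-1}$-$F$}}\leq\gamma\|{\bf V}\|_{\mbox{$A^{-1}$-$F$}}$ column by column) is the intended one, consistent with the identity $\gamma=\sin\varphi$ in (\ref{eq:gamma_identity1}).
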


The quantity $\gamma_m$ corresponds to the angle between a subspace of $A\mathbb{K}_m(A,{\bf  R}_0)$ and
the invariant subspace since for symmetric $A$,
\begin{equation}\label{eq:gamma_identity1}
   \gamma _m = \| (I-\Pi_Y)\Pi_Q \|_{A^{-1}} = \| \Pi_Y - \Pi_Q \|_{A^{-1}}= \sin \varphi \leq  1,
\end{equation}
 where $\varphi$  is the maximum canonical angle between $\mathbb{R}(Y)$ and 
  $\mathbb{R}(Q)$; see, e.g., {\cite[p.~56]{kato1995perturbation}} and {\cite[p.~92]{steward90}} for details. 
   
   The computation of the upper bound is possible using the expression (\ref{eq:cota_ls_cg11}), which is a
   reformulation of  (\ref{eq:cota_ss_cg11}) as a least squares problem of dimension $2n$. 
    The least-squares problem (\ref{eq:cota_ls_cg11}) is well posed as long 
    as  {$\mathbb{R}(Q)\cap A\mathbb{K}_j(A, {\bf R}_m)=\{0\}$}.

We note that this theorem is very general and applies to {\em any} space  {$\mathbb{R}(Y)$} which is a subspace
of the Krylov subspace, and {\em any} invariant subspace {$\mathbb{R}(Q)$}. Of course for our bound to be
meaninful, one would take an appropriate invariant subspace close to the Krylov subspace.
It is well known that the eigenvalues which are captured first as the iterations $k$ progress,
are those at the end of the spectrum; see, e.g., \cite{vanderSluis1986}. 
Thus, in our numerical experiments we choose
eigenvectors corresponding to $k_1$ eigenvalues in the lower part of the spectrum and/or
$k_2$ eigenvalues in the upper part of the spectrum.

\subsection{Spectral-based bound for block CG\label{seq:spectrum}} 
In this section, we present a convergence bound 
for the block CG algorithm based on spectral information. 
The aim is to generalize to the block case, the bounds developed in \cite{van1987convergence}.
In this process, we benefited from the background material provided in \cite{mdKent89}. 
The bounds are obtained by considering the optimality property~(\ref{eq:residual_block})
of block CG and constructing a comparison process using an auxiliary matrix polynomial.
The bound uses the Ritz values, which, as we have shown, 
coincide with the latent roots of the residual polynomial $\Phi_m(\lambda)$,  and they 
converge to the eigenvalues of matrix $A$ for a sufficiently large $m$ \cite{SIMONCINI1996_Ritz}.

We begin by recalling the following result.
    \begin{lemma}\label{lem:block_poly_1}
            {\rm \cite{mdKent89}} 
Let $\Upsilon _l (\eta)=\sum^{l}_{i=0}\eta^i {\cal C}_i$ be any matrix polynomial 
            where  ${\cal C}_i$ are $s\times s$ matrices, and let $A$ be a $n\times n$ matrix, ${\bf Z}$ a $n\times s$ 
            matrix and $S$  any invertible matrix of order~$n$. Then the following result holds,
         \begin{equation}
                  \Upsilon _l (A) \circ {\bf Z}= S \Upsilon _l (S^{-1}AS)\circ (S^{-1}{\bf Z}).
         \end{equation}
     \end{lemma}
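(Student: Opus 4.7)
The plan is to unwind the definition of the Graag operator on the right-hand side using (\ref{Graag:operator}) and collapse it back to the left-hand side via the standard identity for powers of a conjugated matrix. First, I would apply the definition to write
\[
\Upsilon_l(S^{-1}AS) \circ (S^{-1}{\bf Z}) = \sum_{i=0}^{l} (S^{-1}AS)^i (S^{-1}{\bf Z}) \, {\cal C}_i.
\]
Next, I would invoke the elementary fact $(S^{-1}AS)^i = S^{-1} A^i S$, which follows by a trivial induction on $i$: each interior factor $S S^{-1}$ collapses to the identity. Substituting this into the sum yields
\[
\sum_{i=0}^{l} S^{-1} A^i S \, S^{-1} {\bf Z} \, {\cal C}_i = S^{-1} \sum_{i=0}^{l} A^i {\bf Z} \, {\cal C}_i.
\]

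Finally, I would multiply on the left by $S$, which cancels the leading $S^{-1}$ and produces
\[
S \, \Upsilon_l(S^{-1}AS) \circ (S^{-1}{\bf Z}) = \sum_{i=0}^{l} A^i {\bf Z} \, {\cal C}_i = \Upsilon_l(A) \circ {\bf Z},
\]
as required. There is no genuine obstacle: the argument is purely algebraic. The only point worth noting is that the matrix coefficients ${\cal C}_i$ act from the \emph{right} on the block vector, while the similarity transformation acts from the \emph{left} on $A$ and ${\bf Z}$; hence the two operations never collide, and the $S, S^{-1}$ factors can be shuffled past the coefficients ${\cal C}_i$ freely. The invertibility of $S$ is used only to give meaning to $S^{-1}AS$ and $S^{-1}{\bf Z}$; no spectral or structural hypothesis on $A$, ${\bf Z}$, or the ${\cal C}_i$ is required.
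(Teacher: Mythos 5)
Your proof is correct: unwinding the Graag operator, applying $(S^{-1}AS)^i = S^{-1}A^iS$, and cancelling the $S\,S^{-1}$ factors is exactly the right (and essentially only) argument, and your observation that the coefficients ${\cal C}_i$ act on the right while $S$ acts on the left is the one point that needs care. The paper itself gives no proof of this lemma---it is quoted from the cited reference---so there is nothing to contrast; your argument is the standard one and fills that gap cleanly.
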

  
  Using Lemma \ref{lem:block_poly_1}, 
we can write the norm of the block residual 
(\ref{eq:bcg_pol_21}) 
in terms of polynomials evaluated at
the eigenvalues of the matrix, as shown  in
  the following lemma (adapted  from \cite[Ch.~4, expression (4.7)]{mdKent89}), and 
it will be used in the theorems that follow. 
  

   \begin{lemma}\label{lem:block_poly_2}
Let  $A=V \Lambda V^T$ be a spectral decomposition.
Let $\left[w_1, w_2, \ldots, w_n \right]^T=V^TR_0$ be the components of the block initial residual on 
       the eigenbasis, where $w_i$'s ($i=1,\ldots,n$) are $s\times 1$ matrices. Consider 
${\bf R}_m=\Phi_m(A)\circ {\bf R}_0$ the residual of block CG at iteration $m$. Then,
     \begin{eqnarray}\label{eq:block_wighted}
            \| {\bf R}_{m}\|^2_{A^{-1}}& = \trace\left({\bf R} ^T_m A^{-1} {\bf R}_m \right) 
            = \trace\left[\sum^n_{i=1} \frac{1}{\lambda_i}    
           \Phi^T_m(\lambda_i) w_i w^T_i \Phi_m(\lambda_i)\right].		
\end{eqnarray}
  \end{lemma}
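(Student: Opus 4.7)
The plan is to change basis to the eigenbasis of $A$, which reduces the block-polynomial action on ${\bf R}_0$ to a per-eigenvalue action, and then unwind the trace in the $\mbox{$A^{-1}$-$F$}$ inner product.

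First I would apply Lemma~\ref{lem:block_poly_1} with $S = V$ (so that $S^{-1} = V^T$) to the identity ${\bf R}_m = \Phi_m(A)\circ {\bf R}_0$ to get
\begin{equation*}
{\bf R}_m \;=\; V\bigl(\Phi_m(\Lambda)\circ(V^T {\bf R}_0)\bigr).
\end{equation*}
Write ${\bf W} := V^T {\bf R}_0$, so by hypothesis the $j$-th row of ${\bf W}$ is $w_j^T$. Since $\Lambda$ is diagonal, the $j$-th row of $\Lambda^{i+1}{\bf W}$ equals $\lambda_j^{i+1} w_j^T$. Expanding $\Phi_m(\Lambda)\circ {\bf W}$ via the Graag operator (\ref{Graag:operator}) and summing row by row, I obtain that the $j$-th row of $\Phi_m(\Lambda)\circ {\bf W}$ is exactly $w_j^T\,\Phi_m(\lambda_j)$. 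This is the step where care is required: the $s\times s$ coefficients of $\Phi_m$ act on the right and do not commute with arbitrary matrices, so the manipulation works only because each $\lambda_j^{i+1}$ is a scalar and therefore slides freely past those coefficients. I regard this bookkeeping as the main (mild) obstacle of the proof.

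Letting $M$ denote the $n\times s$ matrix whose $j$-th row is $w_j^T\,\Phi_m(\lambda_j)$, we have ${\bf R}_m = VM$. Using $V^T V = I_n$ and the spectral decomposition $A^{-1} = V\Lambda^{-1}V^T$,
\begin{equation*}
{\bf R}_m^T A^{-1} {\bf R}_m \;=\; M^T V^T V \Lambda^{-1} V^T V M \;=\; M^T \Lambda^{-1} M \;=\; \sum_{i=1}^n \frac{1}{\lambda_i}\,\Phi_m^T(\lambda_i)\, w_i w_i^T\,\Phi_m(\lambda_i),
\end{equation*}
and taking the trace delivers the asserted identity. Once Lemma~\ref{lem:block_poly_1} is available, everything else is a direct matrix computation.
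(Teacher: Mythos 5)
Your proof is correct and follows essentially the same route as the paper: apply Lemma~\ref{lem:block_poly_1} with $S=V$, evaluate the Graag action row by row on the eigenbasis to get the $j$-th row $w_j^T\Phi_m(\lambda_j)$, and then compute the $\mbox{$A^{-1}$-$F$}$ norm. You merely spell out the final trace computation that the paper compresses into ``the result follows taking the norm.''
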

    \begin{proof}
    Using Lemma \ref{lem:block_poly_1} and noting that $VV^T=I_n$ we can write 
    \begin{eqnarray*}
						{\bf R}_m &= & \Phi_m(A)\circ {\bf R}_0 \\
						         & = & V\Phi_m(V^TAV)\circ V^T {\bf R}_0
						          = V \Phi_m(\Lambda)\circ \begin{bmatrix}w^T_1 \\ 
														w^T_2 \\ 
														\vdots \\ 
														w^T_n \end{bmatrix}  
													=  V \begin{bmatrix} w^T_1 \Phi_m(\lambda_1)  \\ 
														w^T_2 \Phi_m(\lambda_2) \\ 
														\vdots \\ 
														w^T_n \Phi_m(\lambda_n) \end{bmatrix}.
  \end{eqnarray*}
  The results follows taking the norm $\|\cdot \|_{A^{-1}}$.
  \end{proof}

      Lemma \ref{lem:block_poly_2} shows that the block residual can be expressed as the trace of a 
      sum of $s\times s$ matrices with the matrix polynomial evaluated on each eigenvalue, and the 
      weights $w_i w^T_i$ with $i=1,\ldots,n$ are $s\times s$ 
symmetric positive semidefinite matrices of rank-one.
Note that for $s=1$, Lemma \ref{lem:block_poly_2} reduces to the CG case. Evaluating the
polynomial on each 
    of the eigenvalues and the elimination of the operator `$\circ$'  simplify the development of a superlinear 
      bound in the block case.

As stated in the previous section, $k_1$ 
denotes the number of eigenvalues taken in the lowest part 
      and $k_2$  the number of eigenvalues taken in the upper part of the spectrum to perform our analysis.


For the sake of simplicity, we begin by stating
and proving the following theorem for the superlinear bound 
in the case of $k_1=1$ and $k_2=0$. That is, we consider the eigenpairs corresponding to the 
   lowest part of the spectrum $(\lambda_1,v_1)$ and its corresponding lowest Ritz value at the iteration 
   $m$ denoted $\theta^{(m)}_1$. 
Let $\bar{\bf R}_0$ be the block residual where the eigenvector $v_1$ 
 has been deflated, and $\bar{\bf R}_j$ is the residual of the block CG process (comparison process) starting 
   with  $\bar{\bf R}_0$. We show a bound of the form
$\| {\bf R}_{m+j} \|_{A^{-1}} \leq \alpha_m \| \bar{\bf R}_j\|_{A^{-1}}$, where
the factor $\alpha_m$ depends on the spectral information, as shown below.
In other words, we bound the norm of block residual at the $(m+j)$th iteration by  the
norm of the residual at the $j$th iteration of the comparison process.
Thus, when the Ritz value approximates well the eigenvalue, the slope of the graph of the
residual norm of the comparison process should mimic the slope of the residual
norm we are trying to bound.


   \begin{theorem}\label{thm:superlinear_spectrum_bcg}
     Let ${\bf R}_{m+j}$ be the block CG residual at the $(m+j)$th iteration and $\bar{\bf R}_j$ be 
     the residual after $j$ iterations 
    of block CG applied to $\bar{\bf R}_0=(I-\Pi_Q){\bf R}_m$, with $\Pi_Q=v_1v_1^T$ and 
    ${\bf R}_m=\Phi_m(A)\circ {\bf R}_0$, {\it i.e.}, 
            \begin{equation}\label{eq:block_bound_vds}
                  \bar{\bf R}_j = \Psi_j(A)\circ \bar{\bf R}_0,
            \end{equation}
     \noindent where $\Psi_j(\lambda)\in {\mathbb{G}}_{m,s}$   is the corresponding matrix polynomial 
     of degree $j$ for the new block CG residual.  Then, 
    \begin{equation}\label{block:bound}
        \| {\bf R}_{m+j} \|_{A^{-1}}\leq \alpha_{m, 1, 0} \| \bar{\bf R}_j \|_{A^{-1}}, 
    \end{equation}
     where $\alpha_{m,1,0}=  \frac{\theta^{(m)}_1}{\lambda_1} 
     \max_{\lambda_i\neq \lambda_1} \frac{|\lambda_i-\lambda_1|}{|\lambda_i-\theta^{(m)}_1|}  \cdot$ 
   \end{theorem}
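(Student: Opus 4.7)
The plan is to adapt the scalar argument of van der Sluis and van der Vorst~\cite{vanderSluis1986} to the block setting, using the spectral expansion of Lemma~\ref{lem:block_poly_2} to reduce everything to sums indexed by the eigenvalues of~$A$.

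First I would rewrite both sides via Lemma~\ref{lem:block_poly_2}. Writing ${\bf R}_m=\sum_i v_i w_i^T\Phi_m(\lambda_i)$ and noting that the filter $I-\Pi_Q$ (with $\Pi_Q=v_1v_1^T$) annihilates the $i=1$ block, one obtains $\bar{\bf R}_j=\sum_{i\ge 2} v_i w_i^T \Phi_m(\lambda_i)\Psi_j(\lambda_i)$ and hence
\[
\|\bar{\bf R}_j\|^2_{\mbox{$A^{-1}$-$F$}}
 =\sum_{i\ge 2}\tfrac{1}{\lambda_i}\trace\!\bigl(\Psi_j^T(\lambda_i)\Phi_m^T(\lambda_i)\,w_iw_i^T\,\Phi_m(\lambda_i)\Psi_j(\lambda_i)\bigr).
\]
Then the optimality property~(\ref{eq:bcg_pol_21}) gives $\|{\bf R}_{m+j}\|_{\mbox{$A^{-1}$-$F$}}\le\|\Xi(A)\circ{\bf R}_0\|_{\mbox{$A^{-1}$-$F$}}$ for every $\Xi\in\mathbb{G}_{m+j,s}$, and I would plug in the ``swap of roots'' test polynomial
\[
\Xi(\eta)\;=\;\chi(\eta)\,\Phi_m(\eta)\,\Psi_j(\eta),\qquad
\chi(\eta)\;=\;\frac{\theta_1^{(m)}(\lambda_1-\eta)}{\lambda_1(\theta_1^{(m)}-\eta)},
\]
whose scalar factor satisfies $\chi(0)=1$ (so that $\Xi(0)={\cal I}_s$), $\chi(\lambda_1)=0$, and $|\chi(\lambda_i)|\le\alpha_{m,1,0}$ for every $\lambda_i\ne\lambda_1$.

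Granted that $\Xi\in\mathbb{G}_{m+j,s}$, evaluating $\|\Xi(A)\circ{\bf R}_0\|^2$ via Lemma~\ref{lem:block_poly_2} kills the $i=1$ summand (because $\chi(\lambda_1)=0$) and multiplies every other summand by $\chi(\lambda_i)^2$; pulling $\max_{i\ge 2}\chi(\lambda_i)^2\le\alpha_{m,1,0}^2$ outside the sum and taking square roots yields exactly~(\ref{block:bound}). The delicate point, and where I expect the bulk of the technical work, is verifying that $\chi\,\Phi_m$ is a bona fide matrix polynomial of degree~$m$ and not merely a rational function: in the scalar case the identity $\phi_m(\theta_1^{(m)})=0$ cancels the apparent pole of $\chi$ at $\theta_1^{(m)}$ automatically, whereas in the block case the analogous statement (that $\theta_1^{(m)}$ is a latent root of $\Phi_m$, as shown in the preceding Proposition) only yields $\det\Phi_m(\theta_1^{(m)})=0$, so $\Phi_m(\theta_1^{(m)})$ is singular but not zero. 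I would bridge this gap using the matrix-polynomial factorization framework of~\cite{mdKent89} to isolate a linear factor of $\Phi_m$ carrying the latent root $\theta_1^{(m)}$ and to replace it by $(\lambda_1-\eta)$, producing a legitimate element of $\mathbb{G}_{m,s}$ whose evaluations at the eigenvalues of $A$ coincide with those of the rational product $\chi\,\Phi_m$, thereby justifying the membership $\Xi\in\mathbb{G}_{m+j,s}$ needed to invoke the optimality step.
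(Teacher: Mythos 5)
Your proposal follows essentially the same route as the paper's proof: the same scalar rational filter $\frac{\theta_1^{(m)}}{\lambda_1}(\eta-\lambda_1)(\eta-\theta_1^{(m)})^{-1}$ applied to $\Phi_m$, the same appeal to the optimality property (\ref{eq:bcg_pol_21}) with this modified polynomial as the competitor, the same eigenbasis expansion via Lemma~\ref{lem:block_poly_2} that annihilates the $i=1$ term, and the same maximization over the remaining scalar factors to extract $\alpha_{m,1,0}$. The membership subtlety you flag --- that in the block case $\det\Phi_m(\theta_1^{(m)})=0$ only makes $\Phi_m(\theta_1^{(m)})$ singular, not zero, so the pole of the rational factor is not automatically cancelled --- is a genuine point, and one the paper's own proof passes over silently (it asserts $\Omega_m\in\mathbb{G}_{m,s}$ from $\Omega_m(0)=\mathcal{I}_s$ alone), so your extra care there goes beyond, rather than against, the published argument.
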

   \begin{proof}
   Let $\Omega_m(\lambda)$ be a matrix-valued polynomial constructed as follows,
   \begin{equation}\label{eq:omega_matrix_polynomial}
            \Omega_m(\lambda)
            =\frac{\theta^{(m)}_1}{\lambda_1}(\lambda -\lambda_1 )(\lambda  
                  - \theta^{(m)}_1 )^{-1}\Phi_m(\lambda) \in  {\mathbb{G}}_{m,s}.
   \end{equation}
By the optimality 
  property (\ref{eq:bcg_pol_21}) of block CG, we have the following relation,
   $$ \left\lVert {\bf R}_{m+j} 
   \right\rVert_{A^{-1}} \leq \left\lVert \Psi_j(A)\circ \left[\Omega_m(A)\circ 
              {\bf R}_0\right]\right\rVert_{A^{-1}}.$$
    Following a procedure similar to that used in Lemma \ref{lem:block_poly_2}, we have
     \begin{eqnarray}\label{eq:thm_bcg_superlinear_1}
         \| {\bf R}_{m+j} \|^2_{A^{-1}}  \leq \left\lVert V \begin{bmatrix} w^T_1 \Omega_m(\lambda_1) \Psi_j(\lambda_1) \\ 
	w^T_2 \Omega_m(\lambda_2) \Psi_j(\lambda_2) \\ 
		\vdots \\ 
  w^T_n \Omega_m(\lambda_n) \Psi_j(\lambda_n) \end{bmatrix} \right\rVert^2_{A^{-1}}.
    \end{eqnarray}
  
   Using expression (\ref{eq:omega_matrix_polynomial}), the right hand side of 
   inequality (\ref{eq:thm_bcg_superlinear_1}) 
is equal to
 \begin{equation}\label{eqn:aux1}
\left\lVert V \begin{bmatrix} w^T_1 \frac{\theta^{(m)}_1}{\lambda_1}(\lambda_1 -\lambda_1 )(\lambda_1  
                  - \theta^{(m)}_1 )^{-1} \Phi_m(\lambda_1) \Psi_j(\lambda_1) \\ 
						w^T_2 \frac{\theta^{(m)}_1}{\lambda_1}(\lambda_2 -\lambda_1 )(\lambda_2  
                  - \theta^{(m)}_1 )^{-1}  \Phi_m(\lambda_2) \Psi_j(\lambda_2) \\     
                   \vdots \\ 
					    w^T_n \frac{\theta^{(m)}_1}{\lambda_1}(\lambda_n -\lambda_1 )(\lambda_n  
          - \theta^{(m)}_1 )^{-1}  \Phi_m(\lambda_n)  \Psi_j(\lambda_n) \end{bmatrix} \right\rVert^2_{A^{-1}} .
\end{equation}

By construction, the first row of the matrix on the right hand side of (\ref{eq:thm_bcg_superlinear_1}),
(or equivalently in  expression (\ref{eqn:aux1})) is zero. It follows then that by  evaluating the matrix 
polynomial $\Omega(\lambda)$ on each eigenvalue and taking into account that 
the matrices $w_i w^T_i$ are positive semidefinitive, we have that (\ref{eqn:aux1}) is equal to
\begin{multline}
\left\lVert V \begin{bmatrix} 0 \\ 
		w^T_2 \frac{\theta^{(m)}_1}{\lambda_1}(\lambda_2 -\lambda_1 )(\lambda_2  
                 - \theta^{(m)}_1 )^{-1}  \Phi_m(\lambda_2) \Psi_j(\lambda_2) \\ 	\vdots \\ 
			    w^T_n \frac{\theta^{(m)}_1}{\lambda_1}(\lambda_n -\lambda_1 )(\lambda_n  
    - \theta^{(m)}_1 )^{-1}  \Phi_m(\lambda_n)  \Psi_j(\lambda_n) \end{bmatrix} \right\rVert^2_{A^{-1}} \\
        =  \left[\frac{\theta^{(m)}_1}{\lambda_1} \right]^2  \trace \left[\sum^n_{i=2} (\lambda_i -\lambda_1 
                  )(\lambda_i  
                  - \theta^{(m)}_1 )^{-1} \frac{1}{\lambda_i}    
            (\Phi_m(\lambda_i) \Psi_j(\lambda_i))^T w_i w^T_i \Phi_m(\lambda_i)\Psi_j(\lambda_i) \right].
 \end{multline}
 In the last expression we take the maximum over the scalar factors, and obtain the following bound 
for (\ref{eq:thm_bcg_superlinear_1}).
\begin{eqnarray*}\label{eq:thm_bcg_superlinear_3}
    \leq \left[ \frac{\theta^{(m)}_1}{\lambda_1} \max_{\lambda_i\neq \lambda_1}   
       \frac{|\lambda_i-\lambda_1|}{|\lambda_i-\theta^{(m)}_1|}\right]^2 \trace\left[\sum^n_{i=2}  \frac{1}{\lambda_i}    
            (\Phi_m(\lambda_i) \Psi_j(\lambda_i))^T w_i w^T_i \Phi_m(\lambda_i)\Psi_j(\lambda_i)\right].
 \end{eqnarray*}
 Using Lemma \ref{lem:block_poly_2} and since in the sum starts  with $i=2$, we obtain the following  
   expression to the block CG residual.  
   \begin{eqnarray}
   \left\lVert  {\bf R}_{m+j} \right\rVert^2_{A^{-1}}  & \leq & \left[ \frac{\theta^{(m)}_1}{\lambda_1} 
   \max_{\lambda_i\neq \lambda_1}   
       \frac{|\lambda_i-\lambda_1|}{|\lambda_i-\theta^{(m)}_1|}\right]^2 \left\lVert \Psi_j(A)\circ \Phi_m(A) \circ 
        (I-\Pi_Q){\bf R}_0 
       \right\rVert^2_{A^{-1}} \nonumber \\
     & = & \left[ \frac{\theta^{(m)}_1}{\lambda_1} \max_{\lambda_i\neq \lambda_1} 
      \frac{|\lambda_i-\lambda_1|}{|\lambda_i-\theta^{(m)}_1|}\right]^2 \left\lVert \bar{\bf R}_j \right\rVert^2_{A^{-1}}.
    \end{eqnarray}
    The result follows by taking the square root. 
  \end{proof}

Observe that the expression in (\ref{eq:omega_matrix_polynomial})  is indeed a matrix polynomial.
The Ritz value $\theta^{(m)}_1$ is a latent root of $\Phi_m$, so that in (\ref{eq:omega_matrix_polynomial})
this root is removed, and $\lambda_1$ is added as a root. 
Furthermore, this auxiliary matrix polynomial converges to the residual polynomial $\Phi_m(\lambda)$
as $m$ increases, since he Ritz value $\theta^{(m)}_1$ converges
    to $\lambda_1$ and 
$ 
(\lambda  - \lambda_1 )(\lambda  - \theta^{(m)}_1)^{-1}   
\approx 
{\cal  I}_s 
. $

We state now the general case, that is, a comparison with a process with multiple eigenvalues
deflated and at both ends of the spectrum. The proof follows in the same manner 
as the proof of Theorem~\ref{thm:superlinear_spectrum_bcg}, by appropiately constructing
the auxiliary matrix polynomial $\Omega_m(\lambda)$. 
      

    \begin{theorem}\label{thm:superlinear_spectrum_bcg_general}
      Let ${\bf R}_{m+j}$ be the block CG residual at the $(m+j)$th step and $\bar{\bf R}_j$ be the residual 
      after $j$ steps of block CG applied to $\bar{\bf R}_0=(I-\Pi_Q){\bf R}_m$, with $\Pi_Q=QQ^T$ 
      and ${\bf R}_m=\Phi_m(A)\circ {\bf R}_0$, where $Q\in \mathbb{R}^{n\times k}$ with 
      $k=k_1+k_2$, and  
            \begin{equation}\label{eq:block_bound_vds_general}
                  \bar{\bf R}_j = \Psi_j(A)\circ \bar{\bf R}_0,
            \end{equation}
       \noindent where $\Psi_j(\lambda)\in \mathbb{G}_{m,s}$ is the corresponding block CG matrix 
       polynomial of degree $j$. Then, 
      \begin{equation}\label{spec:bound}
           \left\lVert  {\bf R}_{m+j} \right\rVert_{A^{-1}} \leq \alpha_{m,k_1,k_2} \left\lVert  \bar{\bf R}_j \right\rVert_{A^{-1}}, 
      \end{equation}
      where $\alpha_{m,k_1,k_2}$ is given by 
       \begin{equation}\label{eq:alpha_mlr1}
                    \alpha_{m,k_1,k_2} = \max_{j_1>k_1,\; j_2\geq k_2} \prod^{k_1}_{j=1} 
                             \frac{\theta_j^{(m)}}{\lambda_j} \left| \frac{\lambda_{j_1}-\lambda_j}{
                    \lambda_{j_1}-\theta^{(i)}_{j}} \right| \prod^{k_2}_{j=1} \frac{\theta^{(m)}_{n+1-j}}{\lambda_{n+1-j}} \left| 
                    \frac{ \lambda_{n+1-j}
                            -\lambda_{n-j_2}}{ 
                    \theta^{(m)}_{m+1-j}-\lambda_{n-j_2}}\right| .
      \end{equation}
   \end{theorem}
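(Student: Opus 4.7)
The plan is to mimic the argument of Theorem~\ref{thm:superlinear_spectrum_bcg}, but with an auxiliary matrix polynomial that annihilates $k=k_1+k_2$ chosen eigenvalues of $A$ rather than just one. Specifically, I would define
\begin{equation*}
  \Omega_m(\eta) = \left[\prod_{j=1}^{k_1}\frac{\theta_j^{(m)}}{\lambda_j}\,\frac{\eta-\lambda_j}{\eta-\theta_j^{(m)}}\right]
                   \left[\prod_{j=1}^{k_2}\frac{\theta_{n+1-j}^{(m)}}{\lambda_{n+1-j}}\,\frac{\eta-\lambda_{n+1-j}}{\eta-\theta_{n+1-j}^{(m)}}\right]\Phi_m(\eta).
\end{equation*}
Since each scalar rational factor equals $1$ at $\eta=0$ and $\Phi_m(0)=\mathcal{I}_s$, we obtain $\Omega_m(0)=\mathcal{I}_s$, so $\Omega_m\in\mathbb{G}_{m+k,s}$. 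By construction, $\Omega_m$ vanishes on the $k_1$ smallest and $k_2$ largest eigenvalues of $A$.

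The next step is to invoke the optimality property (\ref{eq:bcg_pol_21}) of block CG at the $(m+j)$th step, yielding
\begin{equation*}
  \|{\bf R}_{m+j}\|_{\mbox{$A^{-1}$-$F$}} \;\leq\; \|\Psi_j(A)\circ(\Omega_m(A)\circ{\bf R}_0)\|_{\mbox{$A^{-1}$-$F$}},
\end{equation*}
and then to expand the right-hand side on the eigenbasis of $A$ using Lemma~\ref{lem:block_poly_2}. Because the scalar factors in $\Omega_m$ commute past every $s\times s$ block $w_iw_i^T$, each summand in the trace factors into a scalar rational prefactor times $\Phi_m(\lambda_i)\Psi_j(\lambda_i)w_iw_i^T\Phi_m(\lambda_i)^T\Psi_j(\lambda_i)^T/\lambda_i$. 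For the $k$ eigenvalues targeted by $\Omega_m$, the prefactor is zero and those summands drop out.

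For the remaining indices $i\in\{k_1+1,\dots,n-k_2\}$, the matrix terms $w_iw_i^T$ are positive semidefinite, so I can pull the absolute value of the scalar prefactor out of the trace and bound it by the maximum over the two free indices $j_1,j_2$ ranging in that remaining set, which yields precisely the constant $\alpha_{m,k_1,k_2}$ defined by (\ref{eq:alpha_mlr1}). After factoring out $\alpha_{m,k_1,k_2}^2$, the leftover trace is at most
\begin{equation*}
  \sum_{i=k_1+1}^{n-k_2}\frac{1}{\lambda_i}\,\Psi_j(\lambda_i)^T\Phi_m(\lambda_i)^T w_iw_i^T\Phi_m(\lambda_i)\Psi_j(\lambda_i),
\end{equation*}
which, by Lemma~\ref{lem:block_poly_2} applied to $\Psi_j(A)\circ\Phi_m(A)\circ(I-\Pi_Q){\bf R}_0=\Psi_j(A)\circ\bar{\bf R}_0=\bar{\bf R}_j$, is exactly $\|\bar{\bf R}_j\|_{\mbox{$A^{-1}$-$F$}}^2$. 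Taking square roots gives (\ref{spec:bound}).

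The main obstacle I anticipate is bookkeeping rather than conceptual: one must verify that the scalar/matrix factorization is clean (so that the scalar max can legitimately be pulled out of the trace), and that the selection of free indices $j_1,j_2$ in the maximum (\ref{eq:alpha_mlr1}) is consistent with the set of eigenvalues not annihilated by $\Omega_m$. The scalar optimization step is the natural place to check that the product structure over the $k_1$ lower and $k_2$ upper factors genuinely decouples so that a single joint maximum over $(j_1,j_2)$ dominates every summand.
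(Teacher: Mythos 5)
Your proposal is correct and follows essentially the same route as the paper: the paper's own proof simply exhibits the auxiliary matrix polynomial $\Omega_m$ as the product of the $k_1+k_2$ scalar rational factors times $\Phi_m$, checks $\Omega_m(0)=\mathcal{I}_s$, and defers to the procedure of Theorem~\ref{thm:superlinear_spectrum_bcg}, which is exactly the expansion-on-the-eigenbasis, drop-the-annihilated-terms, pull-out-the-scalar-maximum argument you spell out. Your sketch is in fact more explicit than the paper's (which leaves the argument of $\Omega_m$ ambiguous in its display), and the only cosmetic difference is your labeling of the degree as $m+k$ rather than $m$.
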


\begin{proof}
  Let $\Omega_m(\lambda)$ be a matrix polynomial constructed as follows,
  {\begin{multline}\label{eq:omega_matrix_polynomial:Teo5}
            \Omega_m(\lambda) =  \prod^{k_1}_{j=1} \frac{\theta_j^{(m)}}{\lambda_j} (\lambda_{j_1}  - \lambda_j )(\lambda_{j_1}  - \theta^{(m)}_{j})^{-1} \times \nonumber \\ \prod^{k_2}_{j=1} \frac{\theta^{(m)}_{n+1-j}}{\lambda_{n+1-j}}(\lambda_{n+1-j}   - \lambda_{n-j_2} )( \theta^{(m)}_{m+1-j}  - \lambda_{n - j_2} )^{-1} \Phi_m(\lambda).
   \end{multline}}
   for $j_1 > k_1$ and $j_2 \geq k_2$, and since $\Omega_m(0)= {\cal I}_s$,  $\Omega_m(\eta) \in {\mathbb{G}}_{m,s} $. 
 The result follows using a similar procedure employed in Theorem~\ref{thm:superlinear_spectrum_bcg}. 
 \end{proof}
 

We note that theorems~\ref{thm:superlinear_spectrum_bcg} 
and~\ref{thm:superlinear_spectrum_bcg_general} 
reduce to the results 
obtained in \cite{van1987convergence} when studying superlinear bound in CG,
when we consider the special case $s=1$.


We end this section by remarking that 
Theorem \ref{thm:superlinear_spectrum_bcg_general} holds in particular
when an eigenvalue has multiplicity $\kappa > 1$. In this case, in the expression for
${\alpha}_{m,k_1,k_2}$ some factors are repeated $\kappa$ times.


\section{Numerical Experiments}\label{sec:experiments_bcg}
We present a series of numerical experiments illustrating the quality of 
both the subspace-based bounds \eqref{eq:cota_ss_cg11} and
the spectral-based bounds \eqref{block:bound}, and we compare them.
We discuss different cases, 
namely different eigenvalue distributions of the coefficient matrix $A$,
as well as 
taking different dimensions of the invariant subspaces in the comparison process.
We also consider the effect of taking increasing numbers of right hand sides.

Without loss of generality, we consider diagonal matrices in our first five examples.
In Example~\ref{ex:ex_real_matrix} we consider a preconditioned model problem.
Recall that the coefficient $\alpha_{m,k_1,k_2}$ in \eqref{block:bound}
depends on $k_1$ eigenvalues in the lower part of the spectrum, and $k_2$ eigenvalues
in the upper part of the spectrum. Different cases of $k_1$ and $k_2$ are considered,
both when the eigenvalues are simple, and when they have multiplicity $\kappa > 1$.
Recall also that in the bound \eqref{eq:cota_ss_cg11} we have an invariant subpace
whose basis are the columns of the  matrix $Y$. In our
experiments, these columns are $y_i=Az_i$, $z_i\in \mathbb{K}_m(A, {\bf R}_0)$,
     and $z_i$ are taken to be Ritz vectors.

For each example,
two sets of complementary results are presented. In one set, we follow the evolution
of the Ritz values as they converge to the corresponding eigenvalues, and look
at the behaviour of the two constants used in our bounds,
$\alpha_{m,k_1,k_2}$ in \eqref{block:bound} and $\gamma_m$ in \eqref{eq:cota_ss_cg11}.
In this case, we use the bounds \eqref{block:bound} and \eqref{eq:cota_ss_cg11} for $j=0$
as $m$ increases.
We can observe than when the Ritz values are close enough to the eigenvalues,
or when $\gamma_m$ is small enough, the convergence curve changes slope; though
this point is hard to pinpoit exactly.
The second set of experiments show that the bounds obtained follow closely the
slope of the convergence curves, especially after the change of slope, sometimes
called the onset of superlinearity, has taken hold.

\begin{example}\label{exp:example_1}
{\rm
Our first example is for the case $s=1$, and thus, the block method reduces to the 
standard case, and our bounds reduce to that presented in \cite{van1987convergence}. 
We consider $k_1=1$, $k_2=0$, that is, only one Ritz value (and one Ritz vector), at the
lowest part of the spectrum.
We do so as to examine three situations in which the Ritz value $\theta_1^{(m)}$
is either very close to the first eigenvalue $\lambda_1$ (Case \ref{exp:example_1}.a),
$\theta_1^{(m)}$ is between $\lambda_1$ and $\lambda_2$ (Case \ref{exp:example_1}.b),
or $\theta^{(m)}_1 > \lambda_2$ (Case \ref{exp:example_1}.c),
as was done in~\cite{van1993superlinear}.
We thus study a $100\times 100$ diagonal matrix  with
    eigenvalues $0.1$,  $0.2$, $0.3$, $0.4$, $5$, $\ldots$ , $100$. The right-hand side is a vector  
    with all unit entries, the initial  approximation ${\bf X}_0=x_0$ is the zero vector, and the initial 
    residual is ${\bf R}_0=b$; {\it i.e.}, the residual has equal components in all eigenvectors that form 
    an eigenbasis so that no one eigendirection is favored. 
   
Recall that
in the expressions (\ref{eq:cota_ss_cg11}) and (\ref{spec:bound}), ${\bf R}_{m+j}$ is 
the block CG residual at the $(m+j)$th step, and $\bar{\bf R}_j$ is the residual after $j$ steps of block CG applied to $\bar{\bf R}_0=(I-\Pi_Q){\bf R}_m$. The counter $ m $ specifies the 
status of iteration advance of block CG when the bound starts to be employed. 
Hence, in the first set of experiments, as $m$ increases, $\theta_1$  convergences
towards $\lambda_1$, as shown, e.g., in  Table \ref{tab:t_close_lambda} below, 
and for fixed $m$, as $j$ increases, we look at the behavior of the bounds,
as shown, e.g., in Table \ref{tab:b1b2_prediction}. 
  
   {\bf {Case \ref{exp:example_1}.a}. 
   The first Ritz value $\theta_1$ is very close to $\lambda_1$.} 
Our first set of experiments are reported in Table \ref{tab:t_close_lambda}, 
where we  show  the behavior of $\alpha_{m, 1, 0}$ and $\gamma_m$ at each  iteration $m$. Bounds
    (\ref{eq:cota_ss_cg11})  and  (\ref{block:bound})  are computed with $j=0$ and are 
presented for several values of $m$. 
In other words, the bounds reduced to
    \begin{equation} \label{cota:experimentalj0}
       b_1:=\| \bar{\bf R}_0\|_{A^{-1}} 
      +\gamma_m\|\Pi_Q{\bf R}_m\|_{A^{-1}},  \mbox{ ~and ~ } 
       b_2 := {\alpha}_{m, 1, 0}\| \bar{\bf R}_0 \|_{A^{-1}}.  
      \end{equation}  
   The comparison residual at iteration $m$ is $\bar{\bf R}_0=\|(I-\Pi_Q){\bf R}_m\|_{A^{-1}}$. 
   Observe that  despite of the fact that the convergence of the Ritz vectors is slow and   
   non-monotone~\cite{Saad1980}, which has influence on $\gamma _m$, the subspace bound $b_1$ gives 
   sharper estimates than the spectral bound $b_2$; see Table \ref{tab:t_close_lambda}.
   
 \begin{table}[htb]
   \centering
   \caption{Case \ref{exp:example_1}.a. The first Ritz value $\theta^{(m)}_1$ is very 
   close to smallest eigenvalue $\lambda_1$. Parameters $k_1=1$, $k_2=0$, $s=1$ and $j=0$. Factors $\gamma _m$ and ${\alpha}_{m, 1, 0}$
     are computed using (\ref{eq:gamma_identity1}) and (\ref{block:bound}), respectively. The subspace bound $b_1$
 and the spectral bound $b_2$ correspond to 
   (\ref{cota:experimentalj0}).  \label{tab:t_close_lambda} }
   \resizebox{\textwidth}{!}{
  {
 \begin{tabular}{c|cc|cc|ccc|c}
  $m$  & $\theta_1$ & $\theta_1/\lambda_1$ &  $\gamma _m$ & ${\alpha}_{m, 1, 0}$   & $b_1$ & $b_2$ 
       & $\|\bar{\bf R}_0\|_{A^{-1}} $ & $\| {\bf R}_m \|_{A^{-1}} $ \\ \hline
 $30$ & 0.12659    & 1.26597                       & 0.61127 & 1.72469  &0.77367 & 0.92530    & 0.53650 & 0.66210     \\
 $31$ & 0.12280    & 1.22800                       & 0.56286  & 1.59067 &  0.67380 & 0.79217   & 0.49801 & 0.58784     \\
 $32$ & 0.11708    & 1.17082                       & 0.48434  &1.41203  & 0.53402  & 0.60523  & 0.42862 & 0.48069     \\
 $33$ & 0.11138    & 1.11386                       & 0.39936  &1.25698 & 0.39652  & 0.42922  & 0.34147 & 0.36825     \\
 $34$ & 0.10771    & 1.07712                       & 0.33933 & 1.16715    &  0.29890 & 0.31279  & 0.26799 & 0.28305     \\  
 \end{tabular}}}
 \end{table}

 Observe that as $m$ increases, then  $ {\alpha}_{m, 1, 0} $ decreases
     (ideally it tends to one) and $\gamma _m$ also decreases (ideally tends to zero).  
    Recall that  when $ {\alpha}_{m, 1, 0} = 1$ and $\gamma_m = 0$, then both bounds 
    coincide. However,  also note that the spectral  bound $b_2$ is more sensitive  
    to changes $ {\alpha}_{m, 1, 0}$ than the  bound $b_1$ to changes in $\gamma _m$. This means 
    that $\gamma _m$ does not need to be close to zero in order for the subspace bound $b_2$ obtain a better 
    approximation than the spectral bound~$b_1$.

  \begin{table}[htb]
 \centering
 \caption{Case \ref{exp:example_1}.a. The first Ritz value $\theta_1$ very close to smallest eigenvalue $\lambda_1$. Parameters are
 $k_1=1$, $k_2=0$, $s=1$, and iteration $m=33$. Factors ${\alpha}_{m, k_1, k_2} = {\alpha}_{33, 1, 0} $  and  
 $\gamma_{m}=\gamma_{33}$ are computed using (\ref{block:bound}) and (\ref{eq:gamma_identity1}), respectively. 
The subspace bound $b_{1,j}$ and  the spectral bound $b_{2,j}$ are given by (\ref{cota:experimentalJ}).}
 \label{tab:b1b2_prediction}
 {\small
 \begin{tabular}{ccccc}
                          & $\theta_1$ & ${\alpha}_{33, 1, 0}$   & $\gamma_{33}$  & $\| {\bf R}_{33}\|_{A^{-1}}$ \\ \cline{2-5} 
                          & 0.11138    & 1.25698 & 0.39936                      & 0.36825         \\ \hline \hline 
\multicolumn{1}{c|}{$j$}  & $b_{1,j}$      & $b_{2,j}$   &  \multicolumn{1}{c|}{\footnotesize $\|\bar {\bf R}_{j} \|_{A^{-1}}$ } & 
{\footnotesize  $\| {\bf R}_{33+j} \|_{A^{-1}}$} \\ \hline
\multicolumn{1}{c|}{$1$}  & 0.34058 & 0.35903    & \multicolumn{1}{c|}{0.28563} & 0.28305         \\
\multicolumn{1}{c|}{$2$}  & 0.29627 & 0.30365    & \multicolumn{1}{c|}{0.24157} & 0.23350         \\
\multicolumn{1}{c|}{$3$}  &  0.27034 & 0.27142    & \multicolumn{1}{c|}{0.21594} & 0.20939         \\
\multicolumn{1}{c|}{$4$}  &0.25733  &  0.25542    & \multicolumn{1}{c|}{0.20321} & 0.19836         \\
\multicolumn{1}{c|}{$5$}  & 0.25064  & 0.24739     & \multicolumn{1}{c|}{0.19684} & 0.19279         \\
\multicolumn{1}{c|}{$6$}  & 0.24635  &  0.24246    & \multicolumn{1}{c|}{0.19293} & 0.18896         \\
\multicolumn{1}{c|}{$7$}  & 0.24228 &  0.23803    & \multicolumn{1}{c|}{0.18944} & 0.18476         \\
\multicolumn{1}{c|}{$8$}  &0.23651  &  0.2320      & \multicolumn{1}{c|}{0.18474} & 0.17775         \\
\multicolumn{1}{c|}{$9$}  & 0.22602  & 0.22135    & \multicolumn{1}{c|}{0.17654} & 0.16362         \\
\multicolumn{1}{c|}{$10$} &  0.20638 & 0.20139    & \multicolumn{1}{c|}{0.16165} & 0.13714        
\end{tabular}}
\end{table}
 
In the second set of experiments, we examine our two bounds in the 
   same stage of convergence of the Ritz values to their corresponding eigenvalues. 
   To this end, we fix $m=33$ and take iterations
   $j$ from $j=1$ to $j=10$. 
For easier reading we denote our bounds as follows,
   \begin{equation}\label{cota:experimentalJ}
   \left \{ 
   \begin{array}{ccl}
      b_{1,j} &=& \min_{ {\bf D} \in A\mathbb{K}_j(A, {\bf R}_m)} 
                         \left\{\|(I-\Pi_Q)({\bf R}_m - {\bf D})\|_{A^{-1}}  \right. \vspace{0.07in} \\ 
          & & \qquad \qquad \qquad \qquad \qquad \left.  +\gamma _m \|\Pi_Q({\bf R}_m - {\bf D})\|_{A^{-1}}  \right\}
  \qquad \mbox{ and } \vspace{0.1in} \\  
     b_{2,j}&=&\alpha_{m, k_1, k_2}\|\bar{\bf R}_j\|_{A^{-1}}.
   \end{array}
   \right .
   \end{equation}

  Table \ref{tab:b1b2_prediction} shows the behavior of expressions the subspace bound $b_{1,j}$ 
  and the spectral bound $b_{2,j}$ 
Note that for this example, both bounds behave similarly.
 

 {\bf Case \ref{exp:example_1}.b.  The Ritz value $\theta_1$ is between $\lambda_1$ and $\lambda_2$.} 
The first set of experiments, for $m=21,\ldots,25$, are presented in 
Table \ref{tab:middle}. 
In this interval, the factor 
  ${\alpha}_{m, 1, 0}$ varies considerable with $m$. This is in contrast with $\gamma _m$ which
does not vary as much.. 
The variation of ${\alpha}_{m, 1, 0}$ is due to the proximity of $\theta^{(m)}_1$ to $\lambda_2$, 
which yields a very rough estimate by the spectral bound $b_2$ of the block CG behavior. 
The estimate is improved either once $\theta^{(m)}_1$  moves away from $ \lambda_2$ 
or by replacing the factor $\alpha_m$ by $\theta^{(m)}_1/\lambda_1$ as suggested in 
\cite{vanderSluis1986}. The latter  
results in a bound ($\frac{\theta_1}{\lambda_1}\|\bar{\bf R}_0\|_{A^{-1}}$)  which is
sharper than either of our bounds in this case.
   \begin{table}[htb]
   \centering
  \caption{{Case \ref{exp:example_1}.b.}  Ritz value $\theta_1$ in $\left[\lambda_1,\lambda_2\right]$. 
  Parameters  $k_1=1$, $k_2=0$, $s=1$, and $j=0$. Factors  ${\alpha}_{m, 1, 0}$ and $\gamma _m$ are computed using  (\ref{block:bound}) and  (\ref{eq:gamma_identity1}), respectively. 
The subspace bound $b_1$ and the spectral bound $b_2$ correspond to (\ref{cota:experimentalj0}). 
}
  \label{tab:middle}
  \resizebox{\textwidth}{!}{
  \begin{tabular}{c|cc|cc|ccl|c}
  $m$ & $\theta^{(m)}_1$ & $\theta_1/\lambda_1$ &  $\gamma_m$ & ${\alpha}_{m, 1, 0}$ & $b_1$   & $b_2$   
         & $\|\bar{\bf R}_0\|_{A^{-1}}$  & $\| {\bf R}_m \| _{A^{-1}}$ \\ \hline 
  21  & 0.20181    & 2.0181               & 0.86898  & 110.93237   & 2.11922  & 113.62360  & 1.02426  & 1.62383     \\
  22  & 0.17786    & 1.7786               & 0.80929  & 8.0359        & 1.78204  & 6.28417     & 0.964314 & 1.39672     \\
  23  & 0.15595    & 1.5595               & 0.74794  & 3.5404        & 1.43954  & 2.47657     &  0.85200  & 1.15887     \\
  24  & 0.14271    & 1.4271               & 0.70553  & 2.4913        & 1.18655  & 1.59363     & 0.73686  & 0.97427     \\
  25  & 0.13622    & 1.3622               & 0.68186  & 2.1359        & 1.03745  &  1.29275    & 0.65704  & 0.86194     \\ 
  \end{tabular}}
 \end{table}

  \begin{table}[h!]
 \centering
 \caption{{Case \ref{exp:example_1}.b.}  Residual bounds at $m=23$, when $\theta_1$ is in the middle of $ \left[\lambda_1,\lambda_2\right]$. Parameters  $k_1=1$, $k_2=0$ and $s=1$. 
   Factors  ${\alpha}_{23, 1, 0}$ corresponds to (\ref{block:bound}) 
    and $\gamma _m$ to  (\ref{eq:gamma_identity1}).  
The subspace bound $b_{1,j}$ and the spectral bound  $b_{2,j}$ correspond to~(\ref{cota:experimentalJ}). 
   }\label{tab:b1b2_middle}
 {\small
 \begin{tabular}{ccccc}
                          & $\theta_1$ & $\alpha_{23, 1, 0}$   & $\gamma_{23}$                    & $\|{\bf R}_{23}\|_{A^{-1}}$           \\ \cline{2-5} 
                          & 0.15595    & 3.54044 & 0.74794                      & 1.15887         \\ \hline  \hline
 \multicolumn{1}{c|}{$j$}  & $b_{1,j}$      &   $b_{2,j}$    & {\footnotesize $\|\bar {\bf R}_{j} \|_{A^{-1}}$} & {\footnotesize  $\| {\bf R}_{23+j} \|_{A^{-1}}$ }\\ \hline
 \multicolumn{1}{c|}{$1$}   &  1.28592 &  2.47657   & \multicolumn{1}{c|}{0.69950} & 0.97427         \\
 \multicolumn{1}{c|}{$2$}   & 1.15295  & 2.01543    & \multicolumn{1}{c|}{0.56925} & 0.86194         \\
 \multicolumn{1}{c|}{$3$}   & 1.06385  & 1.71144    & \multicolumn{1}{c|}{0.48339} & 0.80285         \\
 \multicolumn{1}{c|}{$4$}   & 1.01284  &  1.54247   & \multicolumn{1}{c|}{0.43567} & 0.77255         \\
 \multicolumn{1}{c|}{$5$}   &  0.98455 &  1.45406   & \multicolumn{1}{c|}{0.41070} & 0.75507         \\
 \multicolumn{1}{c|}{$6$}   & 0.96687  & 1.40444    & \multicolumn{1}{c|}{0.39668} & 0.74175         \\
 \multicolumn{1}{c|}{$7$}   & 0.95238  & 1.36986    & \multicolumn{1}{c|}{0.38691} & 0.72694         \\
 \multicolumn{1}{c|}{$8$}   & 0.93562  & 1.33628    & \multicolumn{1}{c|}{0.37743} & 0.70399         \\
 \multicolumn{1}{c|}{$9$}   & 0.90988  & 1.29108    & \multicolumn{1}{c|}{0.36466} & 0.66210         \\
 \multicolumn{1}{c|}{$10$} & 0.86496  & 1.21714    & \multicolumn{1}{c|}{0.34378} & 0.58784        
 \end{tabular}}
 \end{table}

\begin{figure}[h!]
  \centering
  \includegraphics[width=0.55\textwidth]{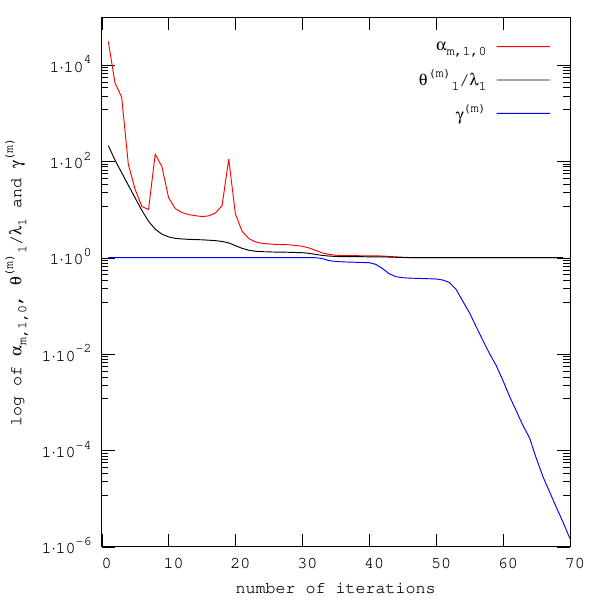}
 \caption{Example \ref{exp:example_1}.c. Behavior of the factors ${\alpha}_{m, 1,0}$ and $\gamma_m$ of our bounds
given in (\ref{cota:experimentalj0}) for each $m$, the number of iterations. The 
smallest Ritz value $\theta^{(m)}_1$ at the $m$th iteration and  
smallest eigenvalue $\lambda_1$ of the matrix $A$.  
 \label{fig:factor_vds_ss_1} }
 \end{figure}

For the second set of experiments, we consider 
$m=23$, when $\theta^{(m)}_1$ fall  almost exactly in the middle of $\left[\lambda_1, \lambda_2 \right]$. Hence, 
  the factor ${\alpha}_{m, 1, 0}$  is large and the spectral bound $b_{2,j}$ overestimates the
residual norm, as shown in 
  Table \ref{tab:b1b2_middle}.
Note that for this case, the subspace bound
   $b_{1, j}$ is sharper than the spectral bound $b_{2, j}$ for all $j$.

   {\bf Case \ref{exp:example_1}.c. When $\theta^{(m)}_1$ is outside 
the interval $[\lambda_1, \lambda_2]$.}
This occurs when $m\leq 20$. The factors 
   ${\alpha}_{m, 1,0}$ and $\gamma_m$  have very different behavior as reported in
   Figure \ref{fig:factor_vds_ss_1}.  Observe that~${\alpha}_{m, 1,0}$ 
has large variations when $\theta^{(m)}_1$ approaches a wrong eigenvalue, and 
   consequently the spectral bound computed with these factors do not provide any useful information. On the other hand, 
   for $m \leq 20$ the value of $\gamma_m$ remains almost equal to $1$, that means that the 
subspace bound 
$b_{1,j}$ 
   in (\ref{cota:experimentalJ}) can 
   be computed as 
   $ \| (I-\Pi_Q) \bar{\bf R}_j \|_{A^{-1}} + \|\Pi_Q  \bar{\bf R}_j  \|_{A^{-1}}$. 
  
In summary, Example \ref{exp:example_1} illustrates the behavior of the bounds in several stages of the 
convergence of the Ritz value $\theta ^{(m)}_1$ towards the eigenvalue $\lambda _1$. 
In {\bf Case \ref{exp:example_1}.a}, both bounds behave similarly
when the Ritz value has sufficiently converged to its corresponding eigenvalue. 
In the other two cases,
the subspace bound is better than the spectral bound when the Ritz value $\theta ^{(m)}_1$ 
is far away of the corresponding eigenvalue $\lambda _1$.
}
 \end{example}

  \begin{example}\label{exp:example_2}
{\rm 
Here, we consider the four smallest eigenvalues in the construction of the bounds, i.e., $k_1=4$ and $k_2=0$.
    In this experiment, the matrix $A$ and the initial iterate ${\bf X}_0$ are the same as in 
    Example \ref{exp:example_1}. 

In the first set of experiments, we present in 
Table \ref{tab:exp2}, the convergence of the Ritz values $\theta _i^{(m)}$, $i=1, \ldots , 4$,  
to the four lowest eigenvalues, and the behavior of ${\alpha}_{m, k_1, k_2} = {\alpha}_{m, 4, 0}$ 
and $\gamma _m$. 
In addition, we present the subspace bound $b_{1}$ and 
the spectral bound $b_{2}$ given in (\ref{cota:experimentalj0}),  
and the norms of
the comparison residual $\bar{\bf R}_0$ and the block CG residual ${\bf R}_m$. 
Note that since  $\theta^{(m)}_4$ is close to an eigenvalue 
different than $\lambda _4$, namely $\lambda_5$, the factor ${\alpha}_{m, 4, 0}$ varies widely.
   
    
    \begin{table}[ht]
  \centering
   \caption{Example \ref{exp:example_2}. Ritz values $\theta_i^{(m)}$, $i=1, \cdots, 4$. The 
   bounds area computed with $j=0$, and parameters  $k_1 = 4$ and $k_2 = 0$. 
   Factors  ${\alpha}_{m, 4, 0}$ corresponds to (\ref{eq:alpha_mlr1}) 
   and $\gamma _m$ to (\ref{eq:gamma_identity1}). 
The subspace bound $b_1$ and the spectral bound $b_2$ correspond to (\ref{cota:experimentalj0}). 
The four smallest eigenvalues are $\lambda_1=0.1,\lambda=0.2,\lambda=0.3$ and $\lambda=0.4$.
   \label{tab:exp2} }
  \resizebox{\textwidth}{!}{
  \begin{tabular}{c|cccc|ccccc|lc}
   $m$      & $\theta^{(m)}_1$ & $\theta^{(m)}_2$ & $\theta^{(m)}_3$ & $\theta^{(m)}_4$ & $\alpha_{m,4,0}$     
   & $\gamma_m$ & $b_1$ & $b_2$ & $\|\bar{\bf R}_0\|_{A^{-1}}$ & $\| {\bf R}_{m}\|_{A^{-1}}$ \\ 
   \hline
  39       & 0.10485          & 0.24301    & 0.39061          & 5.00336      & 29249.6 & 0.99998 &   0.23883   &1340.984 & 0.04584 & 0.19836     \\ 
  40      & 0.10469          & 0.24235    & 0.39033          & 4.99812      & 52236.3 & 0.99996 &   0.22012   & 1547.637 &  0.02962 & 0.19279 \\ 
  41       & 0.10458          & 0.24189    & 0.39013         & 4.98636       & 7124.5   & 0.99983 &   0.21305   &   184.532 &   0.02590 & 0.18896  \\
  42       & 0.10446          & 0.24138    & 0.38991         & 4.35890       & 131.885 & 0.98594 &   0.21336   &  4.53408  &   0.03437 & 0.18476  \\
  43       & 0.10426          & 0.24049    & 0.38949         & 2.33890       & 16.9294 & 0.95700  &    0.21476  &  0.88256  &   0.05213 & 0.17775 \\
  44       & 0.10384          & 0.23842    & 0.38838          & 1.10663      & 5.3865   & 0.88939 &    0.20429  &  0.40350  &   0.07491 & 0.16362 \\
  45       & 0.10302          & 0.23329    & 0.38414          & 0.58421      & 2.40210 & 0.75256 &    0.16799   & 0.16799  &   0.09033 & 0.13714  \\
   46      & 0.10185          & 0.22274    & 0.35799          & 0.42276      & 1.46327 & 0.55404 &     0.11397  & 0.12184   &    0.08327 & 0.10002 \\ 
  \end{tabular}}
  \end{table}

\begin{figure}[htb]
  \centering
  \includegraphics[width=0.6\textwidth,angle=0]{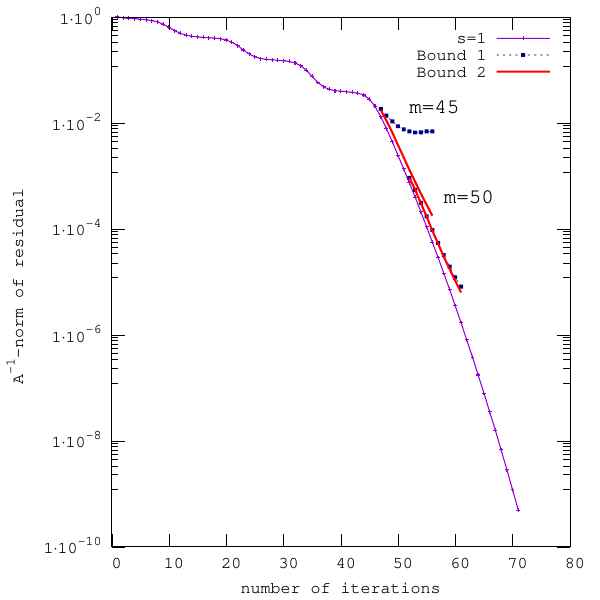}
  \caption{Example \ref{exp:example_2}. 
Block CG residual $\| {\bf R}_m\|_{A^{-1}}$, subspace bound 1 and spectral bound 2 from
  (\ref{cota:experimentalJ}) computed with $m=45$ and $m=50$. Parameters $k_1=4$ and $k_2=0$.
  \label{fig:factor_vds_ss_2}   }
  \end{figure}

The second set of experiments are presented in
Figure \ref{fig:factor_vds_ss_2}, where we see the behavior of 
the residual norm $\| {\bf R}_{m}\|_{A^{-1}}$ of the block CG, the subspace bound $b_{1,j}$ and 
spectral bound~$b_{2,j}$ as expressed in (\ref{cota:experimentalJ}).
%
Note that
at iteration $m=45$ the spectral bound $b_{2,j}$ is sharper than the subspace bound $b_{1,j}$. 
This is because the subspace  bound $b_{1,j}$ with a moderate $\gamma_m$  
 ($\gamma _m=0.75256$) amplifies significantly 
     the residual component on the selected eigenspace~$\mathbb{R}(Q)$. 
At iteration $m=50$ the selected Ritz values are very close to its respective eigenvalues and 
the resulting bounds are almost the same.
}
    \end{example}

\begin{figure}[htb]
    \centering
     \includegraphics[width=0.60\textwidth]{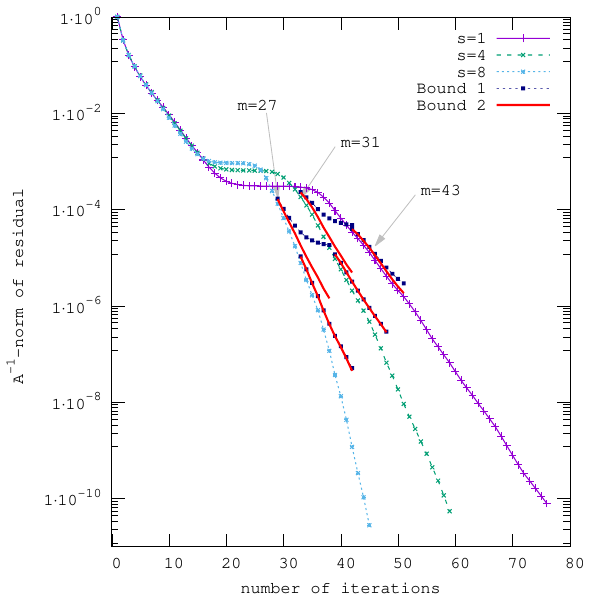}
    \caption{Example \ref{ex:ex_block_1}. Residual convergence behavior for the block 
    CG $\left\lVert  {\bf R}_{m+j} \right\rVert_{A^{-1}} $ for $s=1, 4, 8$,
    subspace and spectral bounds  $b_{1,j}$ and $b_{2,j}$ correspond to expressions  
    (\ref{cota:experimentalJ}).  Parameters $k_1=1$ and $k_2=0$.  Note: $m=43$ correspond to ($\alpha=1.0254$, $\gamma=0.1572$), $m=31$ correspond to ($\alpha=1.201$, $\gamma=0.4083$),   and $m=27$ correspond to ($\alpha=1.0758$, $\gamma=0.2648$).
    \label{fig:residuals_vds_ss_1_block}    }
    \end{figure} 

 \begin{example}\label{ex:ex_block_1}
{\rm
  This example is designed to analyze the effect of the block size $s$ on the 
bounds and on the convergence of the block CG in the presence of 
a single eigenvalue near the origin, i.e., $k_1=1$ and $k_2=0$.
 This experiment considers a diagonal matrix of dimension $404 \times 404$  with eigenvalues 
 on the diagonal with values $0.0005, 0.08,\ldots, 2.42$,  that is, the matrix has one isolated eigenvalue 
 near to zero and the other $403$ eigenvalues are equally distributed between $0.08$ and $2.42$. It is 
 expected to observe superlinear convergence once the method captures 
 the lowest eigenvalue $0.005$.

   Figure \ref{fig:residuals_vds_ss_1_block} shows the convergence history for block sizes $s=1$, $4$ and $8$.  
In this example, and in those in the rest of the section, when $s>1$, the matrix ${\mathbf B}$ has repeated copies of $b$,
but the initial set of vectors in ${\mathbf X}_0$ are nonzero and randomly generated, implying that ${\mathbf R}_0$ has
all distinct columns.
On each residual history, we plot the subspace bound  $b_{1,j}$ and
the spectral bound $b_{2,j}$ at two different stages of convergence: soon after the onset
of superlinearity occurs, and a numbero of iterations later. 
Observe that for values of $m$ relatively large, both bounds capture well the slope 
   of the superlinear regime.  In addition, observe that as the block size is increased, 
block CG hastens the onset of 
superlinear convergence, but going from $s=4$ to $s=8$ the difference in the onset is moderate.      

    \begin{figure}[htb]
   \begin{subfigure}{0.48\textwidth}
  \centering
  \includegraphics[width=0.96\textwidth,angle=-90]{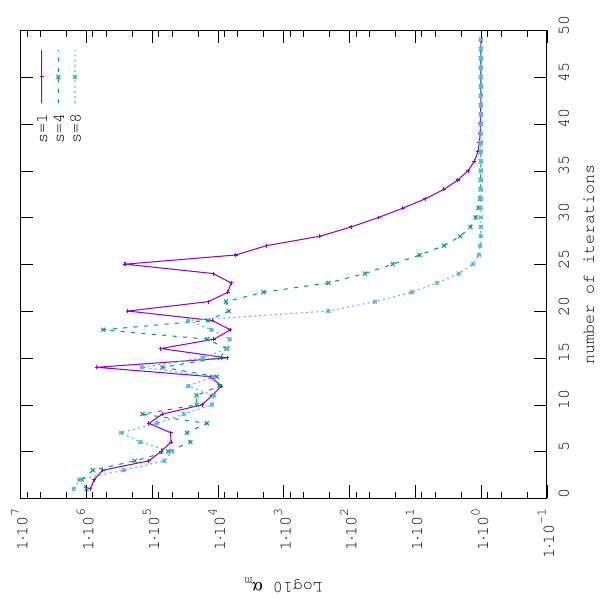}
  \caption{Behavior of $\alpha_{m, 1, 0}$ } \label{fig:m1_fm_block}
  \end{subfigure}
  \begin{subfigure}{0.48\textwidth}
  \includegraphics[width=0.98\textwidth,angle=-90]{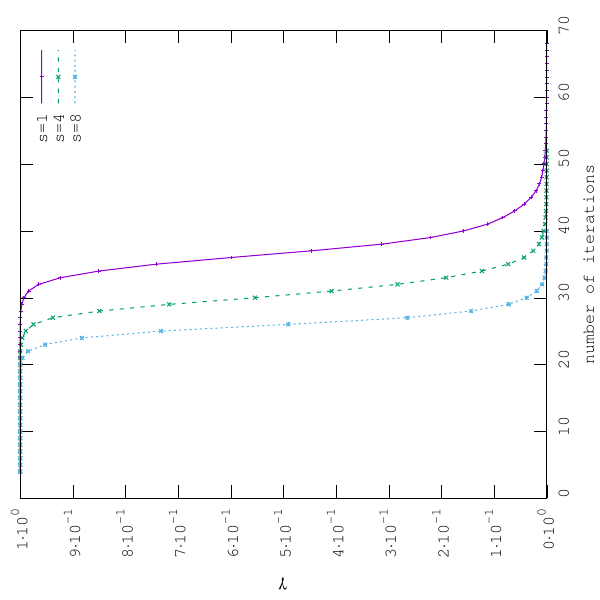}  
  \caption{ Behavior of $\gamma_m$ }   \label{fig:m1_gamma_block}
   \end{subfigure}
   \caption{Example \ref{ex:ex_block_1}.
    Behavior of $\alpha_{m, 1, 0}$ and $\gamma_m$ correspond to the number 
    of iterations $m$. $\alpha_{m, 1, 0}$ is computed with expression (\ref{eq:alpha_mlr1})
     and $\gamma_m$ is computed with  expression (\ref{eq:gamma_identity1}). 
    Parameters $k_1=1$, $k_2=0$ and $s=1, 4, 8$. }
    \label{fig:factor_vds_ss_1_block}
  \end{figure}

Observe also that near the onset of superlinearity (for earlier $m$) 
the spectral bound $b_{2,j}$ is sharper than subspace bound $b_{1,j}$.
This is due to the fact that factor $\gamma_m$ has moderate 
 values ($\gamma_m \in [ 0.5, 1]$)), introducing in the subspace bound $b_{1,j}$ a contribution from the invariant 
   subspace  $\mathbb{R}(Q)$. It is important to remark that moderate values of $\gamma_m$ are in
connection with the rate of convergence of Ritz vectors, which is smaller than that of the convergence of the 
Ritz
   values. This can be appreciated by comparing Figures~\ref{fig:factor_vds_ss_1_block}(a) and \ref{fig:factor_vds_ss_1_block}(b), that the convergence of  $\alpha_{m, 1, 0}$ (to one) is faster than the convergence of  $\gamma_m$ (to zero).

   In addition, in Figure~\ref{fig:factor_vds_ss_1_block}(a), it can be observed the erratic convergence behavior of  
    $\alpha_{m, 1, 0}$. In fact, it takes large oscillatory values before approaching
    the final interval of convergence of the Ritz values. On the other hand, the behavior of $\gamma_m$ is
    smooth and well behaved; see Figure~\ref{fig:factor_vds_ss_1_block}(b). 
    This indicates that the subspace bound $b_{1,j}$  is less
    sensitive to the stage of convergence of the Ritz value, hence it can be used safely to describe the 
    behavior in this region.
}
\end{example}

\begin{example}\label{ex:ex_block_2}
{\rm
  The objective of this example is to analyze the effect of the block size $s$ on the convergence of block CG and the behavior of the bounds to capture the superlinearity in presence of a cluster of eigenvalues near the origin.  To this end, we consider a diagonal matrix of dimension $404 \times 404$  
  with eigenvalues on the diagonal with values
  $0.0005$, $0.0015$, $0.0025$, $0.0035$, $0.0045$, $0.0055$, $0.08,\ldots, 2.42 $. The matrix has six clustered eigenvalues near zero and the rest distributed uniformly between 
  $0.08$ and~$2.42$.


  \begin{figure}[h!]
     \centering
      \includegraphics[width=0.60\textwidth]{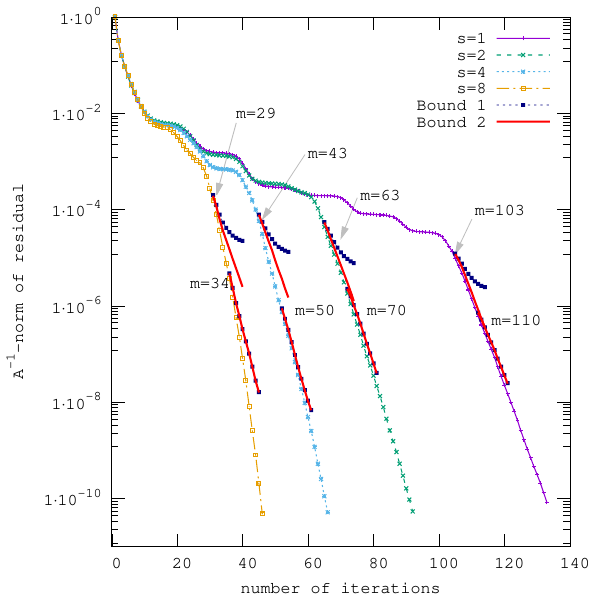}
     \caption{Example \ref{ex:ex_block_2}. Residual convergence behavior 
     for the block CG $\left\lVert  {\bf R}_{m+j} \right\rVert_{A^{-1}} $ for $s=1, 2, 4, 8$,
The subspace bound $b_{1,j}$ and  the spectral bound $b_{2,j}$ are given by (\ref{cota:experimentalJ}).
The parameters are $k_1=6$ and $k_2=0$.  Note: $m=29$ correspond to ($\alpha=1.080$,  $\gamma=0.2702$), $m=43$ correspond to ($\alpha=1.1630$, $\gamma=0.3674$),   $m=63$ correspond to ($\alpha=1.1072$, $\gamma=0.3097$) and $m=103$ correspond to ($\alpha=1.2057$, $\gamma=0.4102$). }
   \label{fig:residuals_vds_ss_2_block}
   \end{figure}

    Figure \ref{fig:residuals_vds_ss_2_block} shows the convergence history of block CG, considering the invariant subspace associated to the six lowest eigenvalues, i.e., both bounds 
    are computed using $k_1=6$ and $k_2=0$ for each block size ($s=1$, $2$, $4$ and $8$). 
Our observations here are very similar to those in the previous example, where a single
eigenvalue was considered.
In particular, note that
that when the block size is increased, the superlinear behavior starts earlier. 
This shows the dependency of the onset of the superlinear behavior on the block size. Moreover, 
as in the previous example,
this example suggests that block CG (with $s> 1$) hastens the onset of the superlinearity in 
the presence of clustered eigenvalues.  
}
\end{example}

\begin{figure}[htb]
  \centering
   \begin{subfigure}{0.50\textwidth}
  \centering
   \includegraphics[width=0.9\textwidth]{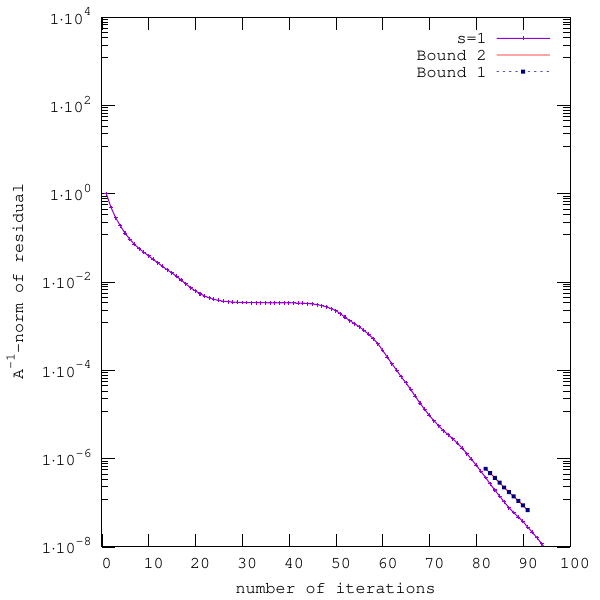}
  \caption{$k_1=1$, $k_2=0$ and $s=1$.} 
  \label{fig:8a}
  \end{subfigure}
   \begin{subfigure}{0.49\textwidth}
  \centering
   \includegraphics[width=0.92\textwidth]{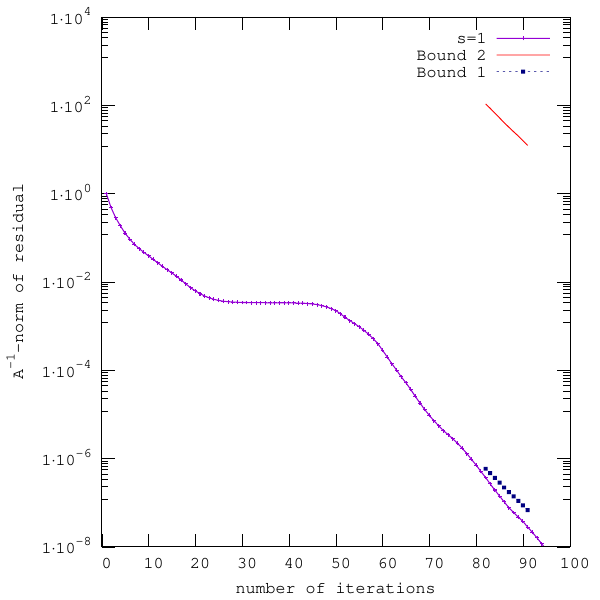}
   \caption{$k_1=2$ , $k_2=0$ and $s = 1$.} 
   \label{fig:8b}
  \end{subfigure}
     \begin{subfigure}{0.49\textwidth}
  \centering
    \includegraphics[width=0.95\textwidth]{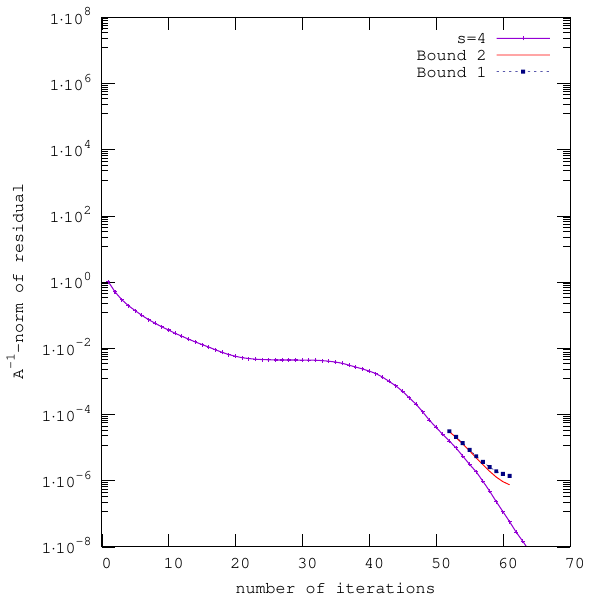}
  \caption{ $k_1=4$, $k_2=0$ and $s=4$.} 
  \label{fig:8c}
  \end{subfigure}
   \begin{subfigure}{0.49\textwidth}
  \centering
 \includegraphics[width=0.93\textwidth]{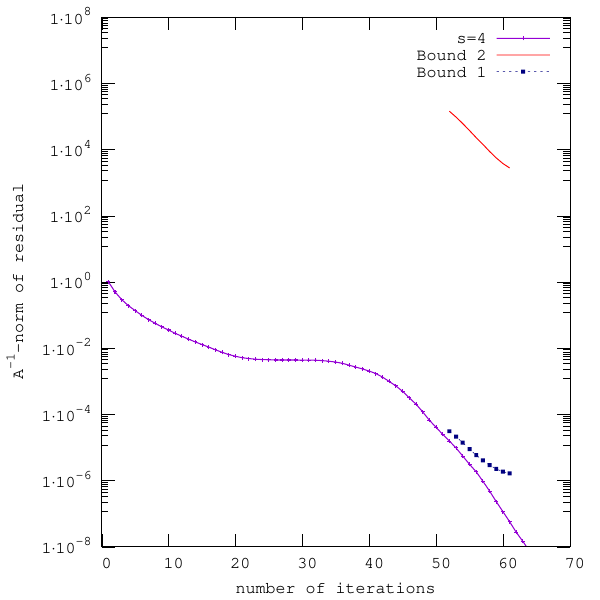}
   \caption{ $k_1=5$, $k_2=0$ and $s=4$.} 
   \label{fig:8d}
  \end{subfigure}
   \caption{Example  \ref{ex:ex_eigen_rep}.  Residual convergence behavior 
     for block CG. 
The subspace bound $b_{1,j}$ and  the spectral bound $b_{2,j}$ are given by (\ref{cota:experimentalJ}).
   (a) Parameters values $s=k_1$ and  $\alpha_{80,1,0}=1.00002$ and $s=1$, 
   (b) Parameters values $k_1=2$ and $\alpha_{80,2,0}=1.83221\times 10^{8}$ and $s=1$,,
   (c) Parameters values $s=k_1$ and  $\alpha_{50,4,0}=1.00108$ and $s=4$,  and
   (d) Parameters values $k_1=5$ and $\alpha_{50,5,0}=4.718055\times 10^{9}$ and  $s=4$.
     \label{fig:ex_4.5} }
 \end{figure}


\begin{example}\label{ex:ex_eigen_rep}
{\rm
In this example we analyze effect of an eigenvalue with algebraic multiplicity $\kappa > 1$ on the 
two bounds we study. To this end, we consider a $384\times 384$ matrix with an eigenvalue $\lambda = 0.0005$ with  
algebraic multiplicity $5$ in the lowest  part  of its  spectrum. 
The rest of eigenvalues are uniformly distributed between  $0.065$ and $5.42$. 
 
 Figure \ref{fig:ex_4.5}  shows the block CG residual, the subspace bound $b_{1,j}$, and  the residual bound~$b_{2,j}$. 
 When the parameter $k_1 =s$, the block size  (see Figures \ref{fig:ex_4.5}(a)  and \ref{fig:ex_4.5}(c)),
 the spectral bound $b_{2,j}$ (defined in (\ref{cota:experimentalJ}) with $\alpha_{m, k_1, k_2}$ 
approximates  adequately the behavior of the residual. On the other hand, if $ k_1  > s$ 
  (see Figures  \ref{fig:ex_4.5}(b)  and \ref{fig:ex_4.5}(d)), then the bound captures the slope but it is 
far from  sharp. In all cases the subspace bound $b_{1,j}$ approximates  adequately the behavior of the residual. 
  
  At this point, it is important to remark that the CG polynomial only captures one copy of the eigenvalue with multiplicity, but block CG can find up to $s$ copies in the case of repeated eigenvalues \cite{underwood}. This remark also applies for clustered eigenvalues. Hence, if $s =  k_1 \leq \kappa $, block CG captures $s$ eigenvalues and the spectral bound $b_{2,j}$ which uses these $k_1=s$ eigenvalues approximates well the residual. However, if  $s <  k_1 \leq \kappa $, then the bound is expected to capture more eigenvalues than the block CG is able to capture, and consequently the approximation bound is not sharp. 
Note the different horizontal scales in Figures \ref{fig:ex_4.5}(c) and (d) for $s=4$, compared to 
that in Figures~\ref{fig:ex_4.5}(a) and (b) for $s=1$.

 The analysis of this example suggest that when an eigenvalue with algebraic multiplicity $\kappa$ is considered, the spectral-based bound approximates adequately the residual behavior when 
the number of eigenvalues taken as reference (denoted by $k_1 \leq \kappa$) is at most equal to the block size $s$.

In this example, it can also be observed that as the block size $s$ is increased, 
the number of eigenvalues captured by block CG is larger, hence the onset of the 
superlinear convergence occurs earlier (compare for instance Figures \ref{fig:ex_4.5}(b)  and \ref{fig:ex_4.5}(d). 
This is in accordance with the observation made in Example \ref{ex:ex_block_2} showing 
a close relationship between block size $s$ and the onset of superlinear convergence.
}
\end{example}

\begin{figure}[htb]
  \centering
   \begin{subfigure}{0.48\textwidth}
    \centering
     \includegraphics[width=0.9\textwidth]{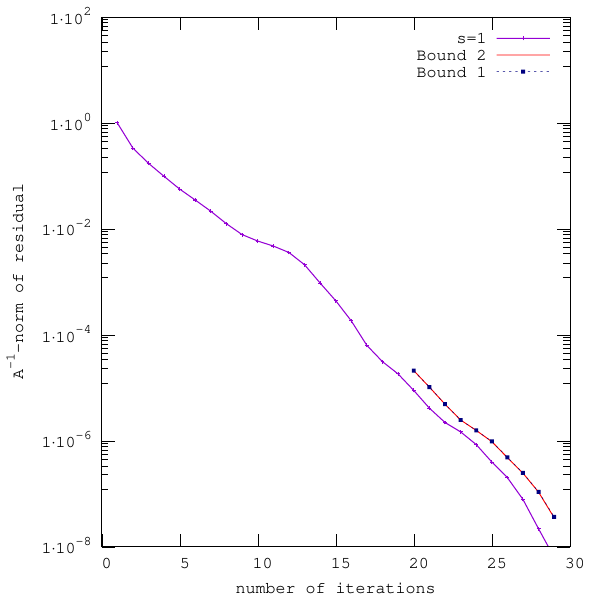}
  \caption{$k_1=1$, $k_2=0$ and $s=1$.} 
  \label{fig:10a}
  \end{subfigure}
   \begin{subfigure}{0.50\textwidth}
   \centering
    \includegraphics[width=0.9\textwidth]{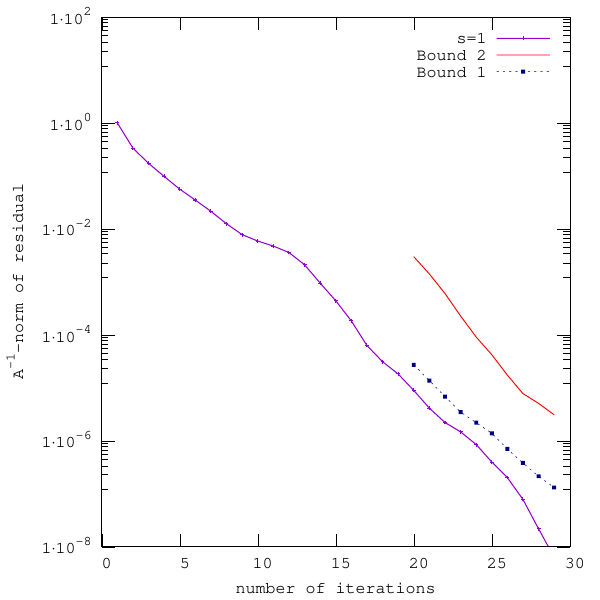}
   \caption{$k_1=2$, $k_2=0$ and $s=1$ .} 
   \label{fig:10b}
  \end{subfigure}
     \begin{subfigure}{0.48\textwidth}
   \centering
   \includegraphics[width=0.9\textwidth]{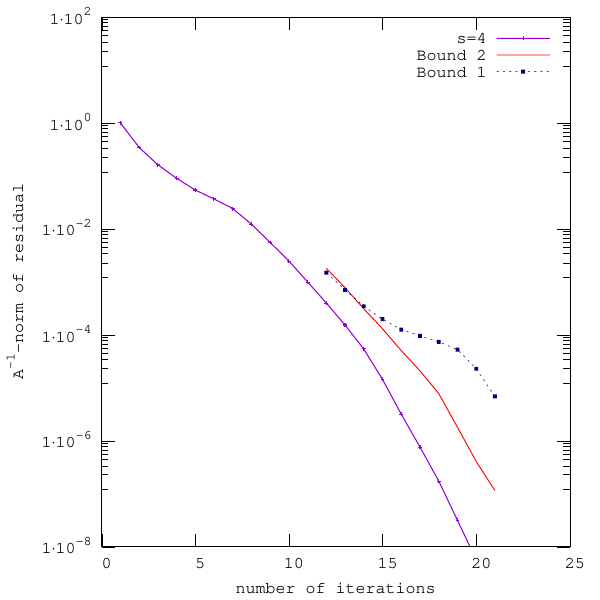}
   \caption{$k_1=4$, $k_2=0$ and $s=4$.} 
   \label{fig:10c}
   \end{subfigure}
     \begin{subfigure}{0.50\textwidth}
   \centering
    \includegraphics[width=0.9\textwidth]{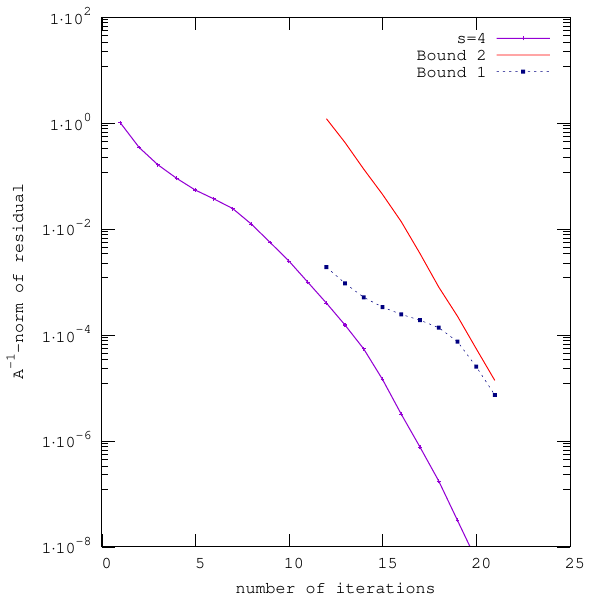}
   \caption{$k_1=5$, $k_2=0$ and $s=4$ .} 
   \label{fig:10d}
   \end{subfigure}
   \caption{Example  \ref{ex:ex_real_matrix}. 
   Poisson matrix preconditioned using an incomplete Cholesky factorization. 
   Residual convergence behavior for the block CG, 
   the bound $b_{2,j}$ corresponding to expression  (\ref{cota:experimentalJ}) 
     and {$ \left\lVert \bar{\bf R}_j \right\rVert_{A^{-1}}$}.  
   (a) Parameters $k_1=1$  and $\alpha_{20,1,0}=1.01$. 
   (b) Parameters $k_1=2$ and $\alpha_{20,2,0}=89.4478$. (c) Parameters $k_1=4$ and $\alpha_{10,4,0}=1.448558$. (d) Parameters $k_1=7$ and $\alpha_{10,7,0}=29.224$.}
     \label{fig:ex_realmatrix1}
 \end{figure}

\begin{example}\label{ex:ex_real_matrix}
{\rm
For our last example we consider a
$400\times 400$ matrix obtained with a standard discretization of the
2D Poisson equation, presonditioned with  incomplete Cholesky factorization with no fill.
Thus, the coefficient matrix is $\tilde{A}=L^{-1}AL^{-T}$. 
The maximum eigenvalue is 1.2015 and the minimum is 0.0724. The ten smallest eigenvalues are $0.0724$, $ 0.1652$, $0.1699$, $0.2483$, $0.2971$, $0.2994$, $0.3486$, $0.3742$, $0.4362$, $0.4367$, $0.4396$, $0.4802$. The rest of the eigenvalues are distributed between $0.5014$ and $1.2015$.

Figure \ref{fig:ex_realmatrix1} shows the behavior of the residual, together with  
the subspace and spectral  bounds $b_{1,j}$ and $b_{2,j}$ as in (\ref{cota:experimentalJ}).
In one set of experiments, we consider
 the block size $s=1$, and parameters $k_1=1$ and $k_2=0$, and $k_1=2$ and $k_2=0$. 
Figure~\ref{fig:ex_realmatrix1}(a) shows a good approximation of the bounds to the block CG residual.  
This is in accordance with the observation made that the convergence is mainly due to the convergence of 
the first eigenvalue. It can also be observed (similar to Example \ref{ex:ex_eigen_rep}) that when $k_1$ is 
larger than $s$, the bounds capture the slope of the residual but the spectral bound $b_{2,j}$ is 
far from  sharp (see Figure \ref{fig:ex_realmatrix1}(b)). 
Similar observation can be done for a larger block sizes, for instance, 
Figures \ref{fig:ex_realmatrix1}(c) and \ref{fig:ex_realmatrix1}(d) for values 
of $k_1=4$, $k_2=0$ and $s=4$, and  $k_1=5$, $k_2=0$ and $s=5$, respectively. 

Finally, comparing Figures \ref{fig:ex_realmatrix1}(c) and \ref{fig:ex_realmatrix1}(d), since more eigenvalues are captured when the block size is increased, the onset  of the superlinear convergence occurs earlier. 
Again, note the different horizontal scale of these two figures for $s=4$ as compared to those with $s=1$.
}
\end{example}

\section{Conclusions}
\label{sec:conclusions}

We extended the {\it a posteriori} spectral bound introduced by van der Sluis and 
van der Vorst \cite{vanderSluis1986} to the block CG case. 
We have also implemented the subspace bound introduced by Simoncini and Szyld \cite{Simoncini2005}. 
Numerical experiments show that both  bounds capture the slope of the residual 
after the onset of superlinearity.
In addition, when there is a cluster of eigenvalues in the lower part of the matrix spectrum, 
the spectral bound captures better the superlinearity even at the near the onset of superlinearity,
while the subspace bound is sharper in presence of repeated eigenvalues.  

Analyzing the bounds and the residual behavior of the block CG method, it can be  observed that 
the block method accelerates the convergence because it captures more eigenvalues. 
Therefore, in the presence of a cluster or repeated eigenvalues in the lower part of the spectrum, 
the larger the block size $ s $, the earlier is the onset of the superlinear convergence. Hence 
in these cases we suggest the use of block CG with the block size of the order of to the size of the cluster.

\bibliographystyle{siam}
\bibliography{references}

\end{document}